
\documentclass{amsart}
\usepackage{amsfonts, amsbsy, amsmath, amssymb}
\usepackage{tikz}

\newtheorem{thm}{Theorem}[section]
\newtheorem{lem}[thm]{Lemma}
\newtheorem{cor}[thm]{Corollary}
\newtheorem{prop}[thm]{Proposition}
\newtheorem{exmp}[thm]{Example}

\newtheorem{conj}[thm]{Conjecture}

\newtheorem{thm-con}[thm]{Theorem-Conjecture}
\numberwithin{equation}{section}

\theoremstyle{definition}

\allowdisplaybreaks

\newcommand{\f}{\Bbb F}
\newcommand{\fin}{f_{\text{\rm inv}}}
\newcommand{\SF}{\text{\rm SF}_n}
\newcommand{\aut}{\text{\rm Aut}}
\newcommand{\rank}{\text{\rm rank}}


\begin{document}

\title[Two Absolutely Irreducible Polynomials]{Two Absolutely Irreducible Polynomials over $\f_2$ and Their Applications to a Conjecture by Carlet}

\author[X. Hou]{Xiang-dong Hou}
\address{Department of Mathematics and Statistics,
University of South Florida, Tampa, FL 33620}
\email{xhou@usf.edu}

\author[S. Zhao]{Shujun Zhao}
\address{Department of Mathematics and Statistics,
University of South Florida, Tampa, FL 33620}
\email{shujunz@usf.edu}

\keywords{absolute irreducibility, finite field, Lang-Weil bound, sum-free function}

\subjclass[2020]{11G25, 11T06, 11T71, 94D10}

\begin{abstract}
Two polynomials $F_k(X_1,\dots,X_k)$ and $\Theta_k(X_1,\dots,X_k)$ over $\f_2$ arose from the study of a conjecture by C. Carlet about the sum-freedom of the multiplicative inverse function of $\f_{2^n}$. Both $F_k$ and $\Theta_k$ are homogeneous and symmetric with $\deg F_k=2^k-2$ and $\deg\Theta_k=2^{k-1}$. It is known that $F_k$ is absolutely irreducible for $k\ge 3$. Using the Lang-Weil bound and a curious connection between $F_k$ and $\Theta_k$, we show that $\Theta_k$ ($k\ge 3$) is also absolutely irreducible. This conclusion allows us to improve several existing results about Carlet's conjecture.
\end{abstract}

\maketitle

\section{Introduction}

A function $f:\f_{2^n}\to\f_{2^n}$ is said to be {\em $k$th order sum-free} if $\sum_{x\in A}f(x)\ne 0$ for every $k$-dimensional $\f_2$-affine subspace $A$ of $\f_{2^n}$. Sum-free functions were introduced recently by C. Carlet as a natural generalization of {\em almost perfect nonlinear} (APN) functions; the latter are precisely the 2nd order sum-free functions \cite{Carlet-CEA-2024/841, Carlet-CEA-2024/1007}. For applications of APN functions and sum-free functions in cryptography, see \cite[Chapter~11]{Carlet-2021} and \cite{Carlet-CEA-2024/841, Nyberg-LNCS-1992}. Let $\fin:\f_{2^n}\to\f_{2^n}$ be the multiplicative inverse function defined by $\fin(x)=x^{-1}$ for $x\in\f_{2^n}^*$ and $\fin(0)=0$. It is a challenging problem to determine the values of $1\le k\le n$ such that $\fin$ is $k$th order sum-free on $\f_{2^n}$. Let
\begin{equation}\label{SF}
\SF=\{1\le k\le n-1: \fin\ \text{is $k$th order sum-free}\}
\end{equation}
and 
\begin{equation}\label{Kn}
\mathcal K_n=\{1,\dots,n-1\}\setminus\SF.
\end{equation}
We know from \cite{Carlet-CEA-2024/841, Carlet-CEA-2024/1007, Carlet-Hou} that
\[
\SF=\{1,n-1\} \ \text{for even}\ n
\]
and 
\[
\SF\supset\{1,2,n-2,n-1\}\ \text{for odd}\ n\ge 3.
\]
The following conjecture, known as Carlet's conjecture, is supported by strong evidence, both theoretical and numerical.

\begin{conj}[Carlet \cite{Carlet-CEA-2024/1007}]\label{C1.1} 
For odd $n\ge 3$, $\SF=\{1,2,n-2,n-1\}$.
\end{conj}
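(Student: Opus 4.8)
The plan is to prove the nontrivial inclusion $\SF\subseteq\{1,2,n-2,n-1\}$; combined with the inclusion $\{1,2,n-2,n-1\}\subseteq\SF$ recorded above for odd $n\ge 3$, this yields the claimed equality. Equivalently, I would show that every intermediate order $3\le k\le n-3$ lies in $\mathcal K_n$, i.e. that for each such $k$ there is a $k$-dimensional $\f_2$-subspace $A$ of $\f_{2^n}$ with $\sum_{x\in A}\fin(x)=0$. A short computation shows that $\sum_{x\in A}\fin(x)\ne 0$ whenever $0\notin A$, so attention may be restricted to linear subspaces $A$.

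First I would render the condition algebraic. Parametrising $A=\langle a_1,\dots,a_k\rangle$ by a basis and forming the subspace polynomial $L_A(X)=\prod_{u\in A}(X+u)=\sum_{i=0}^k c_i X^{2^i}$, one computes $\sum_{x\in A}\fin(x)=c_1/c_0$, so that the vanishing we seek is exactly $c_1=0$. As a polynomial in $a_1,\dots,a_k$ this is homogeneous and symmetric of degree $2^k-2$, i.e. it is $F_k$. Hence a bad $k$-dimensional subspace exists if and only if $F_k$ has an $\f_{2^n}$-zero $(a_1,\dots,a_k)$ lying off the degenerate locus $D$ where $a_1,\dots,a_k$ are $\f_2$-linearly dependent.

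Next I would count such zeros. Because $F_k$ is absolutely irreducible for $k\ge 3$, its zero set is an absolutely irreducible hypersurface of dimension $k-1$, and the Lang-Weil bound gives $\#V(F_k)(\f_{2^n})=2^{n(k-1)}+O(2^{n(k-1)-n/2})$ with an implied constant depending only on $k$ and $\deg F_k$. Since $F_k$ is irreducible of degree exceeding $1$ it contains no hyperplane, so $D$ meets $V(F_k)$ in dimension at most $k-2$ and accounts for only $O(2^{n(k-2)})$ of these zeros. Thus once $2^{n/2}$ surpasses the degree-dependent Lang-Weil constant there is a zero in general position, producing the desired subspace and placing $k$ in $\mathcal K_n$. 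The improvement driving this paper is to run the count through the lower-degree model instead: by the connection between $F_k$ and $\Theta_k$, the same existence statement is governed by $\Theta_k$, whose absolute irreducibility (of degree only $2^{k-1}$) relaxes the field-size threshold and so settles a wider range of $k$ for each fixed $n$.

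The main obstacle is uniformity in $k$. The count is effective only when $n$ exceeds a fixed multiple of $k$, because $\deg\Theta_k=2^{k-1}$ is exponential; this disposes of the small and moderate orders but leaves an intermediate band, where $k$ is a constant fraction of $n$ yet still lies outside $\{1,2,n-2,n-1\}$, out of reach — precisely the regime in which the degree is of the same exponential order as the field size. Proving Conjecture~\ref{C1.1} in full therefore seems to require either $k$-uniform lower bounds on $\#V(\Theta_k)(\f_{2^n})$ sharper than the generic Lang-Weil constant permits, or a genuinely different input for the middle orders, such as an explicit construction of bad subspaces or a duality linking the intermediate range to the already-settled extremes.
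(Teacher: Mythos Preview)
The statement you are attempting is a \emph{conjecture}: the paper does not prove it, and neither does your proposal. You acknowledge this yourself in the final paragraph, where you correctly isolate the obstruction---the Lang--Weil count is effective only for $n$ larger than a fixed multiple of $k$, so orders $k$ in the middle range (roughly $n/3\lesssim k\lesssim 2n/3$) remain untouched.

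What you have written is an accurate outline of the paper's strategy for its \emph{partial} results. The reduction to $c_1=0$ and the identification $c_1=F_k(a_1,\dots,a_k)$ is exactly Criterion~1 (Theorem~3.1); the switch to the lower-degree model $\Theta_k$ via Criterion~2, together with its absolute irreducibility (Theorem~4.3), is precisely the improvement the paper exploits to obtain Theorem~5.1 ($k\in\mathcal K_n$ once $n\ge 10.8k-15.7$). The paper then pushes further by combining this with the closure properties of $\mathcal K_n$ (Results (2) and (5) in Section~2) and some small-case computer searches to reach Theorem~5.4, Corollaries~5.5--5.6, and Theorem~5.7, but these too are partial. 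The full Conjecture~1.1 remains open, for exactly the reason you give: no uniform-in-$k$ substitute for the degree-dependent Lang--Weil constant is currently available, and no direct construction covers the intermediate band.

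One small inaccuracy: the identification ``homogeneous symmetric of degree $2^k-2$, i.e.\ it is $F_k$'' is a non sequitur as stated---degree and symmetry alone do not pin down $F_k$. The correct justification is the Moore-determinant expansion $L_A(X)=\Delta(a_1,\dots,a_k,X)/\Delta(a_1,\dots,a_k)$, from which $c_1=\Delta_1(a_1,\dots,a_k)/\Delta(a_1,\dots,a_k)=F_k(a_1,\dots,a_k)$ by cofactor expansion along the last column.
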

In the next section, we will review the state of knowledge on Conjecture~\ref{C1.1} prior to the present paper. 

There are two criteria for $\fin$ not to be $k$th order sum-free \cite{Carlet-CEA-2024/1007, Carlet-Hou, EHRZ}. The two criteria are stated in terms of two polynomials $F_k(X_1,\dots,X_k)$ and $\Theta_k(X_1,\dots,X_k)$ over $\f_2$, respectively: $\fin$ on $\f_{2^n}$ is not $k$th order sum-free if and only if there exist $u_1,\dots,u_k\in\f_{2^n}$, linearly independent over $\f_2$, such that $F_k(u_1,\dots,u_k)=0$, equivalently, there exist $v_1,\dots,v_k\in\f_{2^n}$, linearly independent over $\f_2$, such that $\Theta_k(v_1,\dots,v_k)=0$. The polynomials $F_k$ and $\Theta_k$ (defined in Section~3) are constructed quite differently. It is a curious fact that they are connected through the sum-freedom of $\fin$. In Section~3, we give a full account of the two criteria and the polynomials $F_k$ and $\Theta_k$.

It is known that for $k\ge 3$, $F_k(X_1,\dots,X_k)$ is absolutely irreducible (\cite[Lemma~4.1]{Carlet-Hou}), i.e., irreducible over the algebraic closure $\overline\f_2$ of $\f_2$. It was also proved in \cite{EHRZ} by an ad hoc method that $\Theta_4(X_1,\dots,X_4)$ is absolutely irreducible. In Section~4, we prove that in general, $\Theta_k(X_1,\dots,X_k)$ is absolutely irreducible for $k\ge 3$. The proof uses the Lang-Weil bound on the number of zeros of absolutely irreducible polynomials over finite fields and the aforementioned connection between $F_k$ and $\Theta_k$.

The absolute irreducibility of $\Theta_k$, in return, allows us to strengthen several existing results concerning Conjecture~\ref{C1.1}. Specifically, we prove the following statements in Section~5:
\begin{itemize}
\item When $k\ge 3$ and $n\ge 10.8k-15.7$, $\fin$ is not $k$th order sum-free. This improves a result of \cite{Carlet-Hou} which requires $n\ge 10.8k-5$; see \cite[Remark~4.4]{Carlet-Hou}.

\medskip

\item Assume $3\le k\le n-3$. If $3\le k\le\lceil n/3\rceil+2$ or $\lfloor 2n/3\rfloor-2\le k\le n-3$, then $\fin$ is not $k$th order sum-free. The same conclusion can be derived from the results of \cite{Carlet-Hou} when $n$ is sufficiently large ($n\ge 38$). Here we removed such a requirement on $n$.

\medskip

\item Assume $3\le k\le n-3$. If $3\le k\le 12$, then $k\in\mathcal K_n$, i.e., $\fin$ is not $k$th order sum-free. Previous best result in this form was that $3,4\in\mathcal K_n$; see \cite[(update), \S 5.2.4]{Carlet-CEA-2024/1007} and \cite[Theorem~3.2]{EHRZ}.

\medskip

\item Conjecture~1.1 holds for odd $n\le 27$. It was previously known that the conjecture holds for odd $n\le 15$ (\cite{EHRZ}).

\medskip

\item Conjecture~1.1 holds when $7\mid n$, with possible exceptions $(n,k)=(49,23)$, $(49,26)$.
\end{itemize}

\section{Known Results on Carlet's Conjecture}

Recall that $\SF=\{1\le k\le n-1:\fin\ \text{is $k$th order sum-free}\}$ and $\mathcal K_n=\{1,\dots,n-1\}\setminus\SF$. The original form of Carlet's conjecture in \cite[Conclusion]{Carlet-CEA-2024/841} stated that
\[
\SF=\begin{cases}
\{1,n-1\}&\text{for even}\ n,\cr
\{1,2,n-2,n-1\}&\text{for odd}\ n\ge 3.
\end{cases}
\]
Significant progresses have been made on this conjecture in several recent works \cite{Carlet-CEA-2024/841, Carlet-CEA-2024/1007, Carlet-Hou, EHRZ}. The following is a summary of the known results in this regard. We always assume $1\le k\le n-1$.
\begin{enumerate}
\item If $\gcd(k,n)>1$, then $k\in\mathcal K_n$ (obvious).

\medskip

\item $k\in\SF$ if and only if $n-k\in\SF$ (\cite{Carlet-CEA-2024/1007}).

\medskip

\item Carlet's (original) conjecture holds in each of the following cases:
\begin{itemize}
\item $n\le 16$ (\cite{EHRZ}).

\item $n$ is divisible by $2$ or $3$ or $5$ (\cite{Carlet-Hou, EHRZ}).

\item The smallest prime divisor $l$ of $n$ satisfies $(l-1)(l+2)\le (n+1)/2$ (\cite{EHRZ}).
\end{itemize}

\medskip

\item For odd $n\ge 3$, $\SF\supset\{1,2,n-2,n-1\}$ (\cite{Nyberg-LNCS-1994}).

\medskip

\item If $1\le k,l\le n-1$ are such that $k,l\in\mathcal K_n$ and $kl<n$, then $k+l\in\mathcal K_n$ (\cite{Carlet-CEA-2024/1007}).

\medskip

\item If $X^n-1$ has a factor $X^k+a_{k-1}X^{k-1}+\cdots+a_2X^2+a_0\in\f_2[X]$, then $k\in\mathcal K_n$ (\cite{Carlet-CEA-2024/1007, Carlet-Hou}).

\medskip

\item If $n\ge 10.8k-5$, then $k\in\mathcal K_n$ (\cite{Carlet-Hou}). A more precise form of this result is also given in \cite{Carlet-Hou}.

\medskip

\item Assume $3\le k\le n-3$. If $k=3,4$, then $k\in\mathcal K_n$ (\cite{Carlet-CEA-2024/1007, EHRZ}).

\medskip

\item Assume $n\ge 38$ and $3\le k\le n-3$. If $3\le k\le\lceil n/3\rceil+2$ or $\lfloor 2n/3\rfloor-2\le k\le n-3$, then $k\in\mathcal K_n$ (\cite{Carlet-Hou}).
\end{enumerate}

Since Carlet's original conjecture has been confirmed for even $n$, it is reformulated as Conjecture~\ref{C1.1}. In Section~5, the above results (3), (7) -- (9) will be strengthened.

\section{Two Criteria}

Throughout the paper, affine subspaces and subspaces of $\f_{2^n}$ are meant to be $\f_2$-affine subspaces and $\f_2$-subspaces of $\f_{2^n}$. It is known that for all affine subspaces $A$ of $\f_{2^n}$ not containing $0$, $\sum_{x\in A}1/x\ne 0$ (\cite{Carlet-CEA-2024/841}). Therefore, $\fin$ is not $k$th order sum-free if and only if $\sum_{0\ne u\in A}1/u=0$ for some $k$-dimensional subspace $E$ of $\f_{2^n}$. We call a subspace $E$ of $\f_{2^n}$ a {\em zero-sum subspace} if 
\[
\sum_{0\ne u\in E}\frac 1u=0.
\]
There are two criteria for a subspace of $\f_{2^n}$ to be sum-free. Both criteria were originally due to Carlet \cite{Carlet-CEA-2024/841, Carlet-CEA-2024/1007} and were reformulated later in \cite{Carlet-Hou, EHRZ}. The two criteria are stated in terms of two polynomials $F_k$ and $\Theta_k$, respectively. We first recall the constructions of $F_k$ and $\Theta_k$.

\subsection{Constructions of $F_k$ and $\Theta_k$}\

Let
\[
\Delta(X_1,\dots,X_k)=\left|
\begin{matrix}
X_1&\cdots&X_k\cr
X_1^2&\cdots&X_k^2\cr
\vdots&&\vdots\cr
X_1^{2^{k-1}}&\cdots&X_k^{2^{k-1}}
\end{matrix}\right|\in\f_2[X_1,\dots,X_k]
\]
and
\[
\Delta_1(X_1,\dots,X_k)=\left|
\begin{matrix}
X_1&\cdots&X_k\cr
X_1^{2^2}&\cdots&X_k^{2^2}\cr
\vdots&&\vdots\cr
X_1^{2^k}&\cdots&X_k^{2^k}
\end{matrix}\right|\in\f_2[X_1,\dots,X_k].
\]
$\Delta(X_1,\dots,X_k)$ is the {\em Moore determinant} over $\f_2$ (\cite{Moore-BAMS-1896}), and it is well known that $u_1\dots,u_k\in\overline\f_2$ (the algebraic closure of $\f_2$) are linearly independent if and only $\Delta(u_1,\dots,u_k)\ne 0$. It is also known that $\Delta(X_1,\dots,X_k)$ divides $\Delta_1(X_1,\dots,X_k)$ (\cite[\S4]{Carlet-Hou}). Define
\[
F_k(X_1,\dots,X_k)=\frac{\Delta_1(X_1,\dots,X_k)}{\Delta(X_1,\dots,X_k)}.
\]

A partition of a positive integer is called a {\em $2$-adic partition} if all its parts are powers of $2$. Let $\Lambda_k$ denote the set of all $2$-adic partitions of $2^{k-1}$ with at most $k$ parts. Define
\[
\Theta_k(X_1,\dots,X_k)=\sum_{\lambda\in\Lambda_k}m_\lambda(X_1,\dots,X_k),
\]
where $m_\lambda(X_1,\dots,X_k)$ is the monomial symmetric polynomial associated to the partition $\lambda$ (\cite[I.2]{Macdonald-1995}). For example, when $k=4$,
\[
\Lambda_4=\{(2^3), (2^2,2^2) (2^2,2,2),(2^2,2,1,1),(2,2,2,2)\}
\]
and
\[
\Theta_4(X_1,\dots,X_4)=m_{(2^3)}+m_{(2^2,2^2)}+m_{(2^2,2,2)}+m_{(2^2,2,1,1)}+m_{(2,2,2,2)},
\]
where
\[
m_{(2^3)}=X_1^{2^3}+X_2^{2^3}+X_3^{2^3}+X_4^{2^3},
\]
\[
m_{(2^2,2^2)}=X_1^{2^2}X_2^{2^2}+X_1^{2^2}X_3^{2^2}+X_1^{2^2}X_4^{2^2}+X_2^{2^2}X_3^{2^2}+X_2^{2^2}X_4^{2^2}+X_3^{2^2}X_4^{2^2},
\]
\begin{align*}
m_{(2^2,2,2)}=\,&X_2^{2^2}X_3^2X_4^2+X_2^2X_3^{2^2}X_4^2+X_2^2X_3^2X_4^{2^2}\cr
&+X_1^{2^2}X_3^2X_4^2+X_1^2X_3^{2^2}X_4^2+X_1^2X_3^2X_4^{2^2}\cr
&+X_1^{2^2}X_2^2X_4^2+X_1^2X_2^{2^2}X_4^2+X_1^2X_2^2X_4^{2^2}\cr
&+X_1^{2^2}X_2^2X_3^2+X_1^2X_2^{2^2}X_3^2+X_1^2X_2^2X_3^{2^2}.
\end{align*}
\begin{align*}
m_{(2^2,2,1,1)}=\,&X_1^{2^2}X_2^2X_3X_4+X_1^{2^2}X_2X_3^2X_4+X_1^{2^2}X_2X_3X_4^2\cr
&+X_1^2X_2^{2^2}X_3X_4+X_1X_2^{2^2}X_3^2X_4+X_1X_2^{2^2}X_3X_4^2\cr
&+X_1^2X_2X_3^{2^2}X_4+X_1X_2^2X_3^{2^2}X_4+X_1X_2X_3^{2^2}X_4^2\cr
&+X_1^2X_2X_3X_4^{2^2}+X_1X_2^2X_3X_4^{2^2}+X_1X_2X_3^2X_4^{2^2},
\end{align*}
\[
m_{(2,2,2,2)}=X_1^2X_2^2X_3^2X_4^2.
\]
The polynomial $\Theta_k(X_1,\dots,X_k)$ has integer coefficients and is treated as a polynomial over $\f_2$.

To summarize, both $F_k(X_1\dots,X_k)$ and $\Theta_k(X_1,\dots,X_k)$ are homogeneous and symmetric with $\deg F_k=2^k-2$ and $\deg\Theta_k=2^{k-1}$.

\subsection{The Criteria}\

Let $E$ be a $k$-dimensional subspace of $\f_{2^n}$, and define $L_E(X)=\prod_{u\in E}(X-u)\in\f_{2^n}[X]$. Then $L_E:\f_{2^n}\to \f_{2^n}$ is an $\f_2$-linear map with $\ker L_E=E$ (\cite[Theorem~3.52]{Lidl-Niederreiter-FF-1997}), hence $L_E(\f_{2^n})$ is an $(n-k)$-dimensional subspace of $\f_{2^n}$. Denote $L_E(\f_{2^n})$ by $E'$. It is known that $E''=E$, where $E''=L_{E'}(\f_{2^n})$; see \cite[Theorem~11.35]{Berlekamp-1968}. We also define $E^\bot=\{x\in\f_{2^n}:\text{Tr}_{2^n/2}(xy)=0\ \text{for all}\ y\in E\}$. Then the two criteria can be stated as follows:

\begin{thm}[Criterion~1 {\cite{Carlet-CEA-2024/841}, \cite[\S 4]{Carlet-Hou}}]\label{cri1}
A $k$-dimensional subspace $E$ of $\f_{2^n}$ is a zero-sum subspace if and only if 
\[
F_k(u_1,\dots,u_k)=0,
\]
where $u_1,\dots,u_k$ is any basis of $E$.
\end{thm}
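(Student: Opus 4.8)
The plan is to realize $F_k$ as a ratio of two coefficients of the linearized polynomial $L_E$, and then to match that ratio directly with the zero-sum $\sum_{0\ne u\in E}1/u$. First I would express $L_E$ through a bordered Moore determinant. Introducing one extra variable, let $\Delta(u_1,\dots,u_k,X)$ denote the $(k+1)\times(k+1)$ Moore determinant whose last column is the vector with entries $X,X^2,\dots,X^{2^k}$. As a polynomial in $X$ it is a linearized $2$-polynomial that vanishes exactly when $X$ is $\f_2$-dependent on $u_1,\dots,u_k$, i.e.\ when $X\in E$. Expanding along the last column, the coefficient of the top term $X^{2^k}$ is the minor on the first $k$ rows and columns, namely $\Delta(u_1,\dots,u_k)\ne 0$ (here the hypothesis that $u_1,\dots,u_k$ is a basis is used). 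Hence $L_E(X)=\Delta(u_1,\dots,u_k,X)/\Delta(u_1,\dots,u_k)$ is the monic linearized polynomial with kernel $E$.

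Next I would read off the individual coefficients of $L_E(X)=\sum_{i=0}^k c_iX^{2^i}$ from the same cofactor expansion. Up to a sign that is irrelevant over $\f_2$, the coefficient $c_i$ equals the $k\times k$ Moore-type minor obtained by deleting the row of exponent $2^i$, divided by $\Delta(u_1,\dots,u_k)$. Two of these are decisive. Deleting the exponent-$2^1$ row leaves the rows of exponents $2^0,2^2,2^3,\dots,2^k$, which is precisely $\Delta_1(u_1,\dots,u_k)$; therefore $c_1=\Delta_1(u_1,\dots,u_k)/\Delta(u_1,\dots,u_k)=F_k(u_1,\dots,u_k)$. Deleting the exponent-$2^0$ row leaves the rows of exponents $2^1,\dots,2^k$, each entry of which is the square of the corresponding entry of $\Delta(u_1,\dots,u_k)$; since squaring is a ring homomorphism in characteristic $2$, this minor equals $\Delta(u_1,\dots,u_k)^2$, so $c_0=\Delta(u_1,\dots,u_k)\ne 0$.

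Finally I would connect $c_0,c_1$ to the zero-sum. Writing $L_E(X)=X\prod_{0\ne u\in E}(X+u)$ and dividing by $X$, the constant term of $\prod_{0\ne u\in E}(X+u)$ is $\prod_{0\ne u\in E}u=c_0$, while the coefficient of $X$ is $\sum_{0\ne u\in E}\prod_{v\ne u}v=c_1$; dividing these gives $\sum_{0\ne u\in E}1/u=c_1/c_0$. As $c_0\ne 0$, the subspace $E$ is a zero-sum subspace if and only if $c_1=F_k(u_1,\dots,u_k)=0$, which is the assertion. Independence of the chosen basis is automatic, since $L_E$ and hence $c_1/c_0$ depend only on $E$; equivalently, the Moore determinant is unchanged under the column operations realizing a basis change in $\mathrm{GL}_k(\f_2)$.

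The main obstacle is the second paragraph: correctly identifying which $k\times k$ minor of the bordered Moore determinant coincides with the definition of $\Delta_1$, and verifying the Frobenius identity that forces the exponent-$2^0$ minor to equal $\Delta(u_1,\dots,u_k)^2$. Once these two minors are pinned down, the remainder is routine bookkeeping with the symmetric-function coefficients of $\prod_{0\ne u\in E}(X+u)$.
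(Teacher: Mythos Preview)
Your argument is correct. The paper does not give its own proof of this criterion; it is quoted from \cite{Carlet-CEA-2024/841} and \cite[\S4]{Carlet-Hou}, so there is nothing in the present paper to compare your proposal against line by line. That said, your route is precisely the standard one underlying those references: write $L_E(X)=\Delta(u_1,\dots,u_k,X)/\Delta(u_1,\dots,u_k)$, read off the coefficients $c_0,c_1$ of $X^{2^0},X^{2^1}$ as Moore-type minors, and match them to the elementary symmetric-function coefficients of $\prod_{0\ne u\in E}(X+u)$.

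The two identifications you flagged as ``obstacles'' are indeed the only nontrivial checks, and both go through exactly as you say. For $c_1$, deleting the exponent-$2^1$ row in the bordered determinant leaves the rows with exponents $2^0,2^2,\dots,2^k$, which is the definition of $\Delta_1$ verbatim, so $c_1=\Delta_1/\Delta=F_k$. For $c_0$, deleting the exponent-$2^0$ row leaves the matrix whose entries are the squares of those in $\Delta$; since the Frobenius $x\mapsto x^2$ is a ring endomorphism in characteristic $2$, the determinant of the entrywise square equals the square of the determinant, giving $c_0=\Delta^2/\Delta=\Delta\ne0$. Your final step, reading $c_0$ and $c_1$ as the constant and linear coefficients of $L_E(X)/X=\prod_{0\ne u\in E}(X+u)$ (using that $2^i-1=0,1$ precisely for $i=0,1$), yields $\sum_{0\ne u\in E}1/u=c_1/c_0$ and hence the equivalence. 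The basis-independence remark is also fine: a change of basis multiplies both $\Delta$ and $\Delta_1$ by the same nonzero scalar in $\f_2^\times$, so $F_k$ depends only on $E$.
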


\begin{thm}[Criterion~2 {\cite[\S 6.7]{Carlet-CEA-2024/1007}, \cite[Theorem~3.11]{EHRZ}}]\label{cri2}
A $k$-dimensional subspace $E$ of $\f_{2^n}$ is a zero-sum subspace if and only if 
\[
\Theta_k(v_1,\dots,v_k)=0,
\]
where $v_1,\dots,v_k$ is any basis of $(E')^\bot$.
\end{thm}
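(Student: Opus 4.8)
The plan is to reduce the zero-sum property of $E$ to the vanishing of a single coefficient of a linearized polynomial, to transport this condition to the dual space $W:=(E')^\bot$ by an adjoint argument, and finally to identify the relevant coefficient of the vanishing polynomial of $W$ with $\Theta_k$ evaluated on a basis of $W$ through a Moore-determinant quotient.

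First I would make the zero-sum condition explicit. Write $L_E(X)=\prod_{u\in E}(X-u)=\sum_{i=0}^{k}a_iX^{2^i}$ with $a_k=1$; since $0\in E$ and the other $u$ are nonzero, $a_0=\prod_{0\ne u\in E}u\ne 0$. Put $g(X)=L_E(X)/X=\prod_{0\ne u\in E}(X+u)$ and evaluate the logarithmic derivative $g'/g=\sum_{0\ne u\in E}(X+u)^{-1}$ at $X=0$: this gives $\sum_{0\ne u\in E}u^{-1}=g'(0)/g(0)=a_1/a_0$, where $a_1$ is the coefficient of $X^2$ in $L_E$. Hence $E$ is a zero-sum subspace if and only if $a_1=0$.

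Next I would pass to the dual picture. Because $E'=L_E(\f_{2^n})$ is the image of $L_E$ and the form $\mathrm{Tr}_{2^n/2}(xy)$ is nondegenerate, $W=(E')^\bot$ is the kernel of the adjoint map, which here is $L_E^*(Y)=\sum_{i=0}^{k}a_i^{2^{n-i}}Y^{2^{n-i}}$. Thus every $w\in W$ satisfies $\sum_i a_i^{2^{n-i}}w^{2^{n-i}}=0$; raising this identity to the power $2^k$ and using $z^{2^n}=z$ on $\f_{2^n}$ converts it into $\sum_{j=0}^{k}a_{k-j}^{2^j}w^{2^j}=0$. Consequently the linearized polynomial $P(Y)=\sum_{j=0}^{k}a_{k-j}^{2^j}Y^{2^j}$, which has degree $2^k$ and leading coefficient $a_0^{2^k}\ne 0$, vanishes at all $2^k$ points of $W$, so $P(Y)=a_0^{2^k}L_W(Y)$, where $L_W(Y)=\prod_{w\in W}(Y-w)=\sum_{j=0}^{k}b_jY^{2^j}$. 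Comparing coefficients gives $b_j=a_{k-j}^{2^j}/a_0^{2^k}$; in particular $b_{k-1}=a_1^{2^{k-1}}/a_0^{2^k}$, so (as $a_0\ne 0$) the zero-sum condition $a_1=0$ is equivalent to $b_{k-1}=0$.

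It remains to show that $b_{k-1}$, the coefficient of $Y^{2^{k-1}}$ in $L_W$, equals $\Theta_k(v_1,\dots,v_k)$ for any basis $v_1,\dots,v_k$ of $W$. Here I would use the Moore-determinant expression $L_W(Y)=\Delta(v_1,\dots,v_k,Y)/\Delta(v_1,\dots,v_k)$; expanding the numerator along its last column shows $b_{k-1}=M_{k-1}/\Delta(v_1,\dots,v_k)$, where $M_{k-1}$ is the $k\times k$ determinant with columns $v_1,\dots,v_k$ and row exponents $2^0,2^1,\dots,2^{k-2},2^k$ (the exponent $2^{k-1}$ deleted). Since $M_{k-1}$ and $\Delta$ are both alternating in the $v_i$, their quotient is a symmetric polynomial, and a degree count gives $\deg(M_{k-1}/\Delta)=(2^0+\cdots+2^{k-2}+2^k)-(2^k-1)=2^{k-1}=\deg\Theta_k$. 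The main obstacle is the identity $M_{k-1}/\Delta=\Theta_k$ itself, a Frobenius-twisted analogue of the bialternant formula for symmetric functions (here the staircase is $2^0,\dots,2^{k-1}$ and the extra box raises $2^{k-1}$ to $2^k$). I would establish it by matching the monomial expansion of $M_{k-1}/\Delta$ against the $2$-adic partitions indexing $\Lambda_k$, either via the product factorization of the Moore determinant or by induction on $k$. A direct computation confirms the identity in small cases—for instance $M_{k-1}/\Delta=v_1^2+v_1v_2+v_2^2=\Theta_2$ when $k=2$—but carrying out the monomial bookkeeping in general is the delicate point.
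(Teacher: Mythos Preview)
The paper does not supply its own proof of Criterion~2; the theorem is stated with citations to \cite{Carlet-CEA-2024/1007} and \cite{EHRZ} and then used as a black box. So there is no in-paper argument to compare your outline against, and your proposal has to stand on its own.

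Your first two reductions are correct and nicely done. The logarithmic-derivative computation really does give $\sum_{0\ne u\in E}u^{-1}=a_1/a_0$, so the zero-sum property is $a_1=0$. The adjoint identification $(E')^\bot=\ker L_E^*$ is valid for the trace form, and the Frobenius twist $(\,\cdot\,)^{2^k}$ correctly converts $L_E^*$ into the monic polynomial $P(Y)=\sum_{j=0}^k a_{k-j}^{2^j}Y^{2^j}$, whence $b_{k-1}=a_1^{2^{k-1}}/a_0^{2^k}$ and the equivalence $a_1=0\Leftrightarrow b_{k-1}=0$ follows. The Moore-determinant expression $b_{k-1}=M_{k-1}/\Delta$ is also right.

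The gap is exactly where you locate it: the polynomial identity $M_{k-1}(X_1,\dots,X_k)/\Delta(X_1,\dots,X_k)=\Theta_k(X_1,\dots,X_k)$. You verify it for $k=2$ and propose to handle the general case ``by matching the monomial expansion of $M_{k-1}/\Delta$ against the $2$-adic partitions indexing $\Lambda_k$'' or ``by induction on $k$'', but neither argument is carried out. This is not peripheral bookkeeping; it is the substantive combinatorial content of the theorem as proved in the cited references. The analogy with the bialternant formula is suggestive but does not transfer automatically: the row exponents $2^0,\dots,2^{k-2},2^k$ are powers of~$2$ rather than consecutive integers, so $M_{k-1}/\Delta$ is not a Schur polynomial and the usual symmetric-function machinery (Jacobi--Trudi, lattice paths, etc.) does not apply out of the box. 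One has to explain, from the $2$-adic structure, why precisely the partitions in $\Lambda_k$ survive in the quotient. Until that step is actually written, what you have is a correct reduction of Criterion~2 to a sharp combinatorial identity, together with an honest acknowledgement that the identity remains to be proved.
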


\begin{exmp}\label{E3.3}\rm
Recall that $\Lambda_k$ is the set of all $2$-adic partitions of $2^{k-1}$ with at most $k$ parts. For $k=5$ we have
\begin{align*}
\Lambda_5=\{&(2^4), (2^3,2^3), (2^3,2^2,2^2), (2^3,2^2,2,2), (2^3,2^2,2,1,1),\cr &(2^3,2,2,2,2), (2^2,2^2,2^2,2^2),(2^2,2^2,2^2,2,2)\}
\end{align*}
and
\[
\Theta_5(X_1,\dots,X_5)=\sum_{\lambda\in\Lambda_5}m_\lambda(X_1,\dots,X_5).
\]
$\Theta_5$ is a polynomial of degree $2^4$ with 185 terms, which can be easily generated by computer.

Let $\f_{2^{17}}=\f_2[X]/(f)$, where $f=X^{17} + X^3 + 1\in\f_2[X]$ is irreducible.
Through a computer search, we found a $5$-dimensional zero-sum subspace $E\subset\f_{2^{17}}$ with a basis $u_1,\dots,u_5$, where
\[
\left[\begin{matrix} u_1\cr \vdots\cr u_5\end{matrix}\right]=
\left[
\begin{array}{ccccccccccccccccc}
 1 & 0 & 0 & 0 & 0 & 0 & 0 & 0 & 0 & 0 & 0 & 0 & 0 & 0 & 0 & 0 & 0 \\
 0 & 1 & 0 & 0 & 0 & 0 & 0 & 1 & 1 & 0 & 0 & 1 & 1 & 0 & 1 & 0 & 0 \\
 0 & 0 & 1 & 0 & 0 & 1 & 1 & 1 & 0 & 0 & 0 & 1 & 0 & 0 & 1 & 1 & 0 \\
 0 & 0 & 0 & 1 & 0 & 1 & 0 & 1 & 0 & 0 & 1 & 0 & 0 & 1 & 0 & 1 & 1 \\
 0 & 0 & 0 & 0 & 0 & 0 & 0 & 0 & 0 & 1 & 0 & 1 & 1 & 0 & 0 & 1 & 1 \\
\end{array}
\right]
\left[\begin{matrix} X^0\cr \vdots\cr X^{16}\end{matrix}\right],
\]
that is, $F_5(u_1,\dots,u_5)=0$. The subspace $(E')^\bot$ has a basis $v_1,\dots,v_5$, where
\[
\left[\begin{matrix} v_1\cr \vdots\cr v_5\end{matrix}\right]=
\left[
\begin{array}{ccccccccccccccccc}
 1 & 0 & 1 & 0 & 0 & 1 & 0 & 0 & 0 & 0 & 1 & 0 & 0 & 1 & 1 & 1 & 0 \\
 0 & 1 & 0 & 0 & 0 & 1 & 0 & 0 & 0 & 0 & 0 & 0 & 0 & 0 & 0 & 1 & 0 \\
 0 & 0 & 0 & 1 & 0 & 0 & 0 & 0 & 0 & 1 & 1 & 1 & 1 & 1 & 0 & 0 & 0 \\
 0 & 0 & 0 & 0 & 1 & 0 & 0 & 0 & 1 & 0 & 1 & 1 & 0 & 1 & 1 & 0 & 0 \\
 0 & 0 & 0 & 0 & 0 & 0 & 0 & 1 & 1 & 1 & 0 & 1 & 0 & 0 & 1 & 0 & 0 \\
\end{array}
\right]
\left[\begin{matrix} X^0\cr \vdots\cr X^{16}\end{matrix}\right].
\]
We verified that $\Theta_5(v_1,\dots,v_5)=0$.
\end{exmp}

\begin{exmp}\label{E3.4}\rm
Let $\f_{2^{19}}=\f_2[X]/(f)$, where $f=X^{19} + X^5 + X^2 + X + 1\in\f_2[X]$ is irreducible. Again, through a computer search, we found a $5$-dimensional zero-sum subspace $E\subset\f_{2^{19}}$ with a basis $u_1,\dots,u_5$, where
\[
\left[\begin{matrix} u_1\cr \vdots\cr u_5\end{matrix}\right]\!=\!
\left[
\begin{array}{ccccccccccccccccccc}
 1 & 0 & 0 & 0 & 0 & 0 & 0 & 0 & 0 & 0 & 0 & 0 & 0 & 0 & 0 & 0 & 0 & 0 & 0 \\
 0 & 1 & 0 & 0 & 0 & 0 & 0 & 0 & 0 & 1 & 0 & 1 & 0 & 0 & 0 & 1 & 1 & 1 & 0 \\
 0 & 0 & 1 & 0 & 0 & 0 & 1 & 0 & 0 & 1 & 1 & 1 & 0 & 0 & 1 & 0 & 0 & 0 & 0 \\
 0 & 0 & 0 & 1 & 0 & 0 & 0 & 0 & 0 & 0 & 1 & 0 & 1 & 0 & 1 & 0 & 0 & 0 & 1 \\
 0 & 0 & 0 & 0 & 0 & 1 & 0 & 1 & 0 & 0 & 1 & 0 & 0 & 0 & 1 & 0 & 0 & 1 & 1 \\
\end{array}
\right]\!
\left[\begin{matrix} X^0\cr \vdots\cr X^{18}\end{matrix}\right],
\]
that is, $F_5(u_1,\dots,u_5)=0$. The subspace $(E')^\bot$ has a basis $v_1,\dots,v_5$, where
\[
\left[\begin{matrix} v_1\cr \vdots\cr v_5\end{matrix}\right]\!=\!
\left[
\begin{array}{ccccccccccccccccccc}
 1 & 0 & 0 & 0 & 0 & 0 & 0 & 1 & 1 & 1 & 1 & 1 & 0 & 0 & 0 & 1 & 1 & 0 & 0 \\
 0 & 1 & 0 & 0 & 0 & 0 & 0 & 1 & 1 & 1 & 1 & 1 & 1 & 0 & 1 & 1 & 0 & 1 & 0 \\
 0 & 0 & 1 & 0 & 0 & 0 & 0 & 1 & 0 & 1 & 1 & 1 & 0 & 1 & 0 & 1 & 1 & 0 & 0 \\
 0 & 0 & 0 & 1 & 0 & 0 & 1 & 0 & 1 & 1 & 1 & 1 & 0 & 1 & 1 & 0 & 1 & 1 & 1 \\
 0 & 0 & 0 & 0 & 0 & 1 & 1 & 0 & 0 & 1 & 0 & 1 & 1 & 0 & 0 & 0 & 1 & 0 & 0 \\
\end{array}
\right]\!
\left[\begin{matrix} X^0\cr \vdots\cr X^{18}\end{matrix}\right].
\]
We verified that $\Theta_5(v_1,\dots,v_5)=0$.
\end{exmp}

\subsection{Observations and Generalizations}\

In general, for an $\f_q$-subspace $E$ of $\f_{q^n}$, define $L_E(X)=\prod_{u\in E}(X-u)\in\f_{q^n}[X]$. Then $L_E(X)$ is a $q$-polynomial over $\f_{q^n}$ (\cite[Theorem~3.52]{Lidl-Niederreiter-FF-1997}). Therefore, $L_E:\f_{q^n}\to\f_{q^n}$ is an $\f_q$-linear map with $\ker L_E=E$, and $E':=L_E(\f_{q^n})$ is an $(n-k)$-dimensional $\f_q$-subspace of $\f_{q^n}$. It was proved in \cite[Theorem~11.35]{Berlekamp-1968} that $E''=E$, i.e., $L_{E'}(\f_{q^n})=E$. We wish to point out that this fact can be seen from a ring-theoretic point of view. For this purpose, we include a proof which is the same as the one in \cite{Berlekamp-1968} but is stated in the language of ring theory.

\begin{lem}\label{L3.5}
Let $R$ be a ring without zero divisors. If $a,b\in R$ are such that $ab$ is in the center of $R$, then $ab=ba$.
\end{lem}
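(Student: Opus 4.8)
The plan is to prove that in a ring $R$ without zero divisors, if $ab$ lies in the center then $a$ and $b$ commute. The natural strategy is to use the centrality of $ab$ to move it freely across multiplications and then exploit the absence of zero divisors to cancel. First I would consider the element $a(ab)b = a^2b^2$ and compare it with $(ab)(ab) = abab$. Since $ab$ is central, I can write $a(ab)b = (ab)(ab)$ directly, because the central element $ab$ commutes with the $a$ on its left. This gives the identity $a \cdot ab \cdot b = ab \cdot ab$, which is automatic and not yet useful; the real leverage comes from feeding the centrality into a product that forces $ba$ to appear.

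The key step I expect to carry the proof is to examine $a(ba)b$ versus $(ab)(ab)$. On one hand, $a(ba)b = (ab)(ab)$ by associativity. On the other hand, since $ab$ is central, I compute $(ab)(ab) = a(ab)b = a(ba)b$ once I establish $ab = ba$ inside — but that is circular, so instead I would set up the comparison so that cancellation does the work. Concretely, I would look at the product $ab \cdot ab = ab \cdot ab$ and rewrite the right-hand factor using centrality: $(ab)(ab) = (ab)a \cdot b = a(ab) \cdot b = a \cdot (ab) \cdot b$, where the middle equality uses that $ab$ commutes with $a$. Comparing $(ab)(ab) = a(ba)b$ (associativity) with $(ab)(ab) = a(ab)b$ (centrality) yields $a(ba)b = a(ab)b$, hence $a(ba - ab)b = 0$.

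At this point the absence of zero divisors finishes the argument, but I must handle the degenerate cases carefully, which is where the main obstacle lies. If $a = 0$ or $b = 0$, then trivially $ab = 0 = ba$. Otherwise, from $a(ba - ab)b = 0$ with $a \ne 0$ and $b \ne 0$, I would cancel $a$ on the left and $b$ on the right — legitimate precisely because $R$ has no zero divisors — to conclude $ba - ab = 0$, i.e. $ab = ba$. The subtle point to verify is that the two-sided cancellation is valid: from $a \cdot c \cdot b = 0$ with $a, b \ne 0$, I first deduce $c \cdot b = 0$ (else $a \cdot (cb) \ne 0$), and then $c = 0$ (else $c \cdot b \ne 0$), so $c = ba - ab = 0$.

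I expect the only genuine difficulty to be organizing the associativity-versus-centrality bookkeeping so that the cancellable expression $a(ba-ab)b$ emerges cleanly, together with correctly dispatching the trivial zero cases; no computation beyond a few rewrites of a length-four product is required.
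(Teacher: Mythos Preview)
Your argument is correct: from centrality of $ab$ you derive $a(ab)b = a(ba)b$, then cancel $a$ and $b$ using the absence of zero divisors (after dispatching the trivial cases $a=0$ or $b=0$). The paper follows the same strategy but with a shorter computation: since $ab$ is central it commutes with $b$, so $(ab)b = b(ab)$, i.e.\ $(ab)b = (ba)b$; assuming $b\ne 0$, one cancellation on the right gives $ab=ba$. Your version multiplies by $ab$ rather than by $b$, producing a length-four product and requiring cancellation on both sides, whereas the paper needs only a length-three product and a single cancellation. Both work; the paper's choice simply avoids the detour.
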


\begin{proof}
We may assume that $b\ne 0$. We have $abb=bab$, whence $ab=ba$.
\end{proof}

\begin{proof}[Proof that $E''=E$, where $E$ is an $\f_q$-subspace of $\f_{q^n}$]
Let $\mathcal L(q,n)$ denote the ring of $q$-polynomials over $\f_{q^n}$ whose addition is the ordinary and whose multiplication is composition. $\mathcal L(q,n)$ is isomorphic to the shew polynomial ring $\f_{q^n}[X;\sigma]$, where $\sigma$ is the Frobenius of $\f_{q^n}$ over $\f_q$; see \cite[Theorem~2.34]{Hou-ams-gsm-2018}. The center of $\mathcal L(q,n)$ consists of $q^n$-polynomials over $\f_q$. Since $(L_{E'}\circ L_E)(x)=0$ for all $x\in\f_{q^n}$ (by definition) and $L_{E'}\circ L_E$ is monic of degree $q^n$, we have $L_{E'}\circ L_E=X^{q^n}-X$, which belongs to the center of $\mathcal L(q,n)$. By Lemma~\ref{L3.5}, $L_{E'}\circ L_E=L_E\circ L_{E'}$. Now, $L_{E''}\circ L_{E'}=X^{q^n}-X=L_E\circ L_{E'}$, whence $L_{E''}=L_E$, i.e., $E''=E$.
\end{proof}

Let $\mathcal V_k(\f_{q^n})$ denote the set of all $k$-dimensional $\f_q$-subspaces of $\f_{q^n}$. Define
\[
\begin{array}{cccc}
\alpha_k:&\mathcal V_k(\f_{q^n})&\longrightarrow &\mathcal V_{n-k}(\f_{q^n})\cr
&E&\longmapsto & L_E(\f_{q^n}),
\end{array}
\]
and 
\[
\begin{array}{cccc}
\beta_k:&\mathcal V_k(\f_{q^n})&\longrightarrow &\mathcal V_{n-k}(\f_{q^n})\cr
&E&\longmapsto & E^\bot,
\end{array}
\]
where $E^\bot=\{x\in\f_{q^n}:\text{Tr}_{q^n/q}(xy)=0\ \text{for all}\ y\in E\}$. Then both $\alpha_k$ and $\beta_k$ are bijections with $\alpha_{n-k}\circ\alpha_k=\text{id}$ and $\beta_{n-k}\circ\beta_k=\text{id}$. Therefore, $\gamma_k:=\beta_{n-k}\circ\alpha_k:\mathcal V_k(\f_{q^n})\to\mathcal V_k(\f_{q^n})$ is a bijection with $\gamma_k^{-1}=\alpha_{n-k}\circ\beta_k$.


Every $\f_q$-linear map from $\f_{q^n}$ to $\f_{q^n}$ can be uniquely represented by a $q$-polynomial over $\f_{q^n}$ modulo $X^{q^n}-X$. Given a $k$-dimensional $\f_q$-subspace $E$ of $\f_{q^n}$, there are $(q^n-q^0)(q^n-q^1)\cdots(q^n-q^{n-k-1})$ $\f_q$-linear maps $f:\f_{q^n}\to\f_{q^n}$ such that $\ker f=E$. Among these $\f_q$-linear maps, treated as $q$-polynomials over $\f_{q^n}$, $L_E(X)$ is the unique monic one with degree $\le |E|$. $L_E(X)$ is canonical in the sense that its construction does not involve a basis of $E$ or a basis of $\f_{q^n}$ over $\f_q$.

On the other hand, it is known that given a $q$-polynomial $L=\sum_{i=0}^ka_iX^{q^i}$ of degree $\le q^k$ over $\f_{q^n}$, $\ker L\in\mathcal V_k(F_{q^n})$ if and only if $a_k\ne 0$ and $a_0,\dots,a_k$ satisfy the equation
\begin{equation}\label{C-q}
C^{(q^0)}C^{(q^1)}\cdots C^{(q^{n-1})}=I_k,
\end{equation}
where 
\[
C=\left[
\begin{matrix}&&&-a_0/a_k\cr
1&&&-a_1/a_k\cr
&\ddots&&\vdots\cr
&&1&-a_{k-1}/a_k
\end{matrix}
\right],
\]
$C^{(q^i)}$ is the result after applying $(\ )^{q^i}$ to the entries of $C$, and $I_k$ is the $k\times k$ identity matrix; see \cite{Csajbok-Marino-Polverino-Zullo-FFA-2019, McGuire-Sheekey-FFA-2019}. Therefore, the elements of $\mathcal V_k(\f_{q^n})$ are in one-to-one correspondence with the points $[a_0:a_1:\cdots:a_k]$ in the projective space $\text{PG}(k,\f_{q^n})$ with $a_k\ne0$ whose homogeneous coordinates $a_0/a_k,\dots,a_{k-1}/a_k$ satisfy \eqref{C-q}. It is not difficult to see that under this correspondence, the map $\gamma_k:\mathcal V_k(\f_{q^n})\to V_k(\f_{q^n})$ maps $[a_0:a_1:\dots:a_k]$ to $[a_k^{q^0}:a_{k-1}^{q^1}:\cdots:a_0^{q^k}]$.

\medskip

Let's consider a more general situation. Let $K/F$ be a finite cyclic extension (a Galois extension with a finite cyclic Galois group) with $\aut(K/F)=\langle\sigma\rangle$. Then $\sigma^0,\dots,\sigma^{n-1}$, where $n=[K:F]$, is a $K$-basis of $\text{Hom}_F(K,K)$. 

\begin{prop}\label{P3.7}
In the above notation, for each $k$-dimensional $F$-subspace $E$ of $K$, there is a unique tuple $(a_0,\dots,a_k)\in K^{k+1}$ with $a_k=1$ such that $\ker(a_0\sigma^0+\cdots+a_k\sigma^k)=E$.
\end{prop}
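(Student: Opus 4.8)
The plan is to work inside the twisted (skew) polynomial ring $\mathcal R=K[T;\sigma]$, whose multiplication encodes composition of the operators $\sigma^i$ via the rule $Tb=\sigma(b)T$ for $b\in K$; the evaluation $T\mapsto\sigma$ carries $g=\sum_i a_iT^i$ to the $F$-linear operator $\sum_i a_i\sigma^i$ on $K$. The stated $K$-basis property of $\sigma^0,\dots,\sigma^{n-1}$ (Dedekind independence of characters) guarantees that for $k\le n-1$ distinct tuples $(a_0,\dots,a_k)$ give distinct operators, so the task reduces to showing that there is exactly one monic $g$ of $T$-degree $k$ with $\ker g=E$.

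The technical heart, which I expect to be the main obstacle, is a dimension bound: if $g=\sum_{i=0}^d a_i\sigma^i$ is nonzero with $a_d\ne 0$ and $d<n$, then $\dim_F\ker g\le d$. I would prove this by induction on $d$. For $d=0$ the operator is a nonzero scalar multiple of the identity and has trivial kernel. For the inductive step, choose $0\ne w\in\ker g$ and introduce the degree-one factor $M_w(x)=\sigma(x)-\tfrac{\sigma(w)}{w}\,x$. A direct computation shows that $M_w(x)=0$ forces $\sigma(x/w)=x/w$; since $F$ is exactly the fixed field of $\sigma$, this yields $\ker M_w=wF$, one-dimensional. Right Euclidean division in $\mathcal R$ (legitimate because $M_w$ has invertible leading coefficient) gives $g=q\circ M_w+r$ with $r$ a scalar operator; evaluating at $w$ and using $g(w)=0=M_w(w)$ forces $r=0$, so $g=q\circ M_w$ with $q$ nonzero of degree $d-1$. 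Finally $\ker g=M_w^{-1}(\ker q)$, and the restriction of $M_w$ to $\ker g$ has kernel $\ker g\cap wF=wF$ and image inside $\ker q$, whence $\dim_F\ker g\le 1+\dim_F\ker q\le 1+(d-1)=d$ by the induction hypothesis.

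Uniqueness is then immediate: if two monic degree-$k$ operators both have kernel $E$, their difference is a $\sigma$-polynomial of degree $\le k-1$ that vanishes on the $k$-dimensional space $E$, hence is zero by the bound. For existence I would build $L_E$ one factor at a time from a basis $u_1,\dots,u_k$ of $E$. Starting from the identity, and having produced a monic $L^{(j)}$ of degree $j$ with $\ker L^{(j)}=\mathrm{span}_F(u_1,\dots,u_j)$, I set $w_{j+1}=L^{(j)}(u_{j+1})$, which is nonzero since $u_{j+1}$ lies outside $\ker L^{(j)}$, and put $L^{(j+1)}=M_{w_{j+1}}\circ L^{(j)}$. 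This is monic of degree $j+1$; one checks it annihilates each of $u_1,\dots,u_{j+1}$, so its kernel contains their $(j+1)$-dimensional span, and by the dimension bound it equals that span. After $k$ steps, $L^{(k)}$ is the desired monic $\sigma$-polynomial with kernel $E$, and its coefficient tuple is the asserted $(a_0,\dots,a_k)$ with $a_k=1$. The only point requiring genuine care beyond routine bookkeeping in $\mathcal R$ is the kernel bound, and within it the use of the fixed-field property to pin down $\dim_F\ker M_w=1$.
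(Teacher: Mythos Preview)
Your argument is correct and complete, but it proceeds along a genuinely different path from the paper. The paper does not prove the dimension bound $\dim_F\ker(\sum_{i=0}^d a_i\sigma^i)\le d$ itself; it quotes it as a lemma from Gow--Quinlan. For existence the paper writes down $L_E$ in closed form as a Moore-type determinant
\[
L=\left|\begin{matrix}\sigma^0(u_1)&\cdots&\sigma^0(u_k)&\sigma^0\cr\vdots&&\vdots&\vdots\cr\sigma^k(u_1)&\cdots&\sigma^k(u_k)&\sigma^k\end{matrix}\right|,
\]
then invokes a separate lemma on the rank of the matrix of Galois conjugates to see that the leading coefficient $b_k$ is nonzero, and normalizes. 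Your approach instead establishes the kernel bound from scratch by right Euclidean division in $K[T;\sigma]$ against the degree-one factor $M_w$, and builds $L_E$ inductively as a product $M_{w_k}\circ\cdots\circ M_{w_1}$ in the style of Ore. The trade-offs are clear: your route is entirely self-contained and makes the Euclidean structure of the skew polynomial ring do all the work, while the paper's determinant formula, though it leans on two auxiliary lemmas, yields an explicit closed expression for $L_E$ that the paper displays and uses immediately afterward. Uniqueness is argued identically in both.
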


\begin{lem}[{\cite[Theorem~5]{Gow-Quinlan-LAA-2009}}] \label{dim}
In the above notation, for $(a_0,\dots,a_k)\in K^{k+1}$ with $a_k\ne 0$, we have
\[
\dim_F\ker(a_0\sigma^0+\cdots+a_k\sigma^k)\le k.
\]
\end{lem}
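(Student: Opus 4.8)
The plan is to prove the bound by induction on $k$, exploiting the skew polynomial ring $K[T;\sigma]$ (the analogue of the ring $\mathcal{L}(q,n)$ used above) to remove one dimension of the kernel for each unit of degree. We regard $L=a_0\sigma^0+\cdots+a_k\sigma^k$ as the image of the skew polynomial $\ell=\sum_{i=0}^k a_iT^i\in K[T;\sigma]$ under the ring homomorphism $\Phi\colon K[T;\sigma]\to\mathrm{Hom}_F(K,K)$ determined by $\Phi(T)=\sigma$ and $\Phi(a)=(\text{multiplication by }a)$ for $a\in K$; one checks directly that $\Phi$ respects the relation $Ta=\sigma(a)T$, so it is indeed a homomorphism. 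If $k\ge n$ the assertion is trivial because $\dim_F\ker L\le[K:F]=n\le k$, so we may assume $k\le n-1$. The base case $k=0$ is immediate: $L$ is multiplication by $a_0\ne0$, hence injective.

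For the inductive step we may assume $\ker L\ne0$ and fix $0\ne u\in\ker L$. Setting $c=\sigma(u)/u$ and $M_u=\sigma-c\,\mathrm{id}=\Phi(T-c)$, we have $M_u(u)=0$. Since $T-c$ is monic, right division in $K[T;\sigma]$ gives $\ell=\hat q\,(T-c)+r$ with $r\in K$ and $\deg\hat q=k-1$, the leading coefficient of $\hat q$ equal to $a_k\ne0$. Applying $\Phi$ and evaluating at $u$ gives $0=L(u)=\Phi(\hat q)\bigl(M_u(u)\bigr)+ru=ru$, so $r=0$ and $L=Q\circ M_u$ with $Q=\Phi(\hat q)=\sum_{i=0}^{k-1}b_i\sigma^i$ and $b_{k-1}=a_k\ne0$.

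It follows that $v\in\ker L$ if and only if $M_u(v)\in\ker Q$, so $M_u$ restricts to an $F$-linear map $\ker L\to\ker Q$. A direct computation shows that $\ker M_u=\{v\in K:\sigma(v)=cv\}$ is one-dimensional over $F$---any two nonzero solutions have ratio fixed by $\sigma$, hence lying in $F$---and it is contained in $\ker L$ because $u\in\ker L$. Applying rank-nullity to $M_u|_{\ker L}$ therefore yields $\dim_F\ker L\le 1+\dim_F\ker Q$, and since $Q$ has degree $k-1$ with nonzero leading coefficient, the induction hypothesis gives $\dim_F\ker Q\le k-1$, so $\dim_F\ker L\le k$, completing the induction.

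The step I expect to demand the most care is the factorization $L=Q\circ M_u$: one must confirm that $\Phi$ carries products in $K[T;\sigma]$ to compositions of operators in the correct order, and that the right-division remainder is precisely the normalized value $r=L(u)/u$, so that the hypothesis $u\in\ker L$ forces $r=0$. A self-contained alternative would argue by contradiction: were $\dim_F\ker L\ge k+1$, a choice of $F$-linearly independent $u_1,\dots,u_{k+1}\in\ker L$ would make $(a_0,\dots,a_k)$ a nonzero null vector of the $(k+1)\times(k+1)$ generalized Moore matrix $\bigl[\sigma^{j}(u_i)\bigr]_{1\le i\le k+1,\,0\le j\le k}$, forcing $\det\bigl[\sigma^{j}(u_i)\bigr]=0$; this contradicts the fact that such a determinant vanishes exactly when the $u_i$ are $F$-linearly dependent, and proving that generalized Moore criterion would then be the principal obstacle.
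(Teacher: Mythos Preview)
The paper does not supply its own proof of this lemma; it is quoted from \cite[Theorem~5]{Gow-Quinlan-LAA-2009} and used as a black box (for instance, in the proof of Proposition~\ref{P3.7}). Your inductive argument via right division in $K[T;\sigma]$ is correct and complete: the monic divisor $T-c$ permits right Euclidean division, the map $\Phi$ is indeed a ring homomorphism carrying products to compositions in the stated order (the check $Ta\mapsto\sigma\circ m_a=m_{\sigma(a)}\circ\sigma$ is exactly the skew relation), evaluation at $u$ forces the constant remainder $r$ to vanish, and $\ker M_u=Fu$ because the fixed field $K^{\langle\sigma\rangle}$ equals $F$; rank--nullity then closes the induction. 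The alternative you sketch through the generalized Moore determinant is also valid, and the nonvanishing of $\det\bigl[\sigma^{j}(u_i)\bigr]_{0\le j\le k,\,1\le i\le k+1}$ for $F$-independent $u_i$ can itself be established by the same one-step factorization you used in the main argument, so the two routes are equivalent in content rather than genuinely distinct.
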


\begin{lem}\label{L3.8}
Let $K/F$ be a finite dimensional Galois extension with $\aut(K/F)=\{\sigma_1,\dots,\sigma_n\}$, $n=[K:F]$. Then $k$ elements $b_1,\dots,b_k\in K$ are linearly independent over $F$ if and only the $n\times k$ matrix
\begin{equation}\label{B}
B=\left[
\begin{matrix}
\sigma_1(b_1)&\cdots&\sigma_1(b_k)\cr
\vdots&&\vdots\cr
\sigma_n(b_1)&\cdots&\sigma_n(b_k)
\end{matrix}\right]
\end{equation}
has rank $k$.
\end{lem}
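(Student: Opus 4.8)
The plan is to prove Lemma~\ref{L3.8} by relating the rank of $B$ to the condition that $b_1,\dots,b_k$ are linearly independent over $F$, using the linear independence of the automorphisms $\sigma_1,\dots,\sigma_n$ (Dedekind's lemma / Artin's theorem on independence of characters). First I would establish the easy direction: if $b_1,\dots,b_k$ are linearly \emph{dependent} over $F$, say $\sum_{j=1}^k c_j b_j=0$ with $c_j\in F$ not all zero, then applying each $\sigma_i$ and using that $\sigma_i$ fixes $F$ gives $\sum_{j=1}^k c_j\,\sigma_i(b_j)=0$ for every $i$; hence the columns of $B$ satisfy a nontrivial $F$-linear relation, so $\rank B<k$. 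Contrapositively, $\rank B=k$ forces linear independence.

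For the converse, the substantive direction, I would assume $b_1,\dots,b_k$ are linearly independent over $F$ and show $\rank B=k$, equivalently that the $k$ columns of $B$ are linearly independent over $K$. Suppose not: then there exist $d_1,\dots,d_k\in K$, not all zero, with $\sum_{j=1}^k d_j\,\sigma_i(b_j)=0$ for all $i=1,\dots,n$. The key is to reinterpret this as a statement about the $F$-linear functionals on $K$. Since $b_1,\dots,b_k$ are $F$-independent, I can extend them to an $F$-basis $b_1,\dots,b_n$ of $K$. Consider the map $K\to K^n$ sending $x\mapsto(\sigma_1(x),\dots,\sigma_n(x))$; by Dedekind's independence of characters the $\sigma_i$ are $K$-linearly independent, which means the $n\times n$ matrix $[\sigma_i(b_j)]_{1\le i,j\le n}$ is invertible over $K$ (this is exactly the nonvanishing of the relevant discriminant/Galois matrix for a Galois extension). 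The vanishing of $\sum_{j=1}^k d_j\,\sigma_i(b_j)$ for all $i$ then says that the $K$-vector $(d_1,\dots,d_k,0,\dots,0)^T\in K^n$ lies in the kernel of this invertible $n\times n$ matrix, forcing all $d_j=0$, a contradiction. Hence the columns are $K$-independent and $\rank B=k$.

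The main obstacle is justifying that the full $n\times n$ Galois matrix $[\sigma_i(b_j)]$ is invertible, i.e.\ that independence of characters upgrades from ``the $\sigma_i$ are $K$-linearly independent as functions'' to ``the matrix of their values on an $F$-basis is nonsingular.'' The cleanest route is: the $\sigma_i$ span $\text{Hom}_F(K,K)$ over $K$ (dimension count $n$), so evaluation against an $F$-basis $b_1,\dots,b_n$ gives a $K$-isomorphism from the $K$-span of the $\sigma_i$ to $K^n$; nonsingularity of $[\sigma_i(b_j)]$ is equivalent to this evaluation being a bijection, which holds precisely because a nonzero $K$-combination $\sum_i e_i\sigma_i$ cannot vanish on a basis (it would then be the zero functional, contradicting independence of characters). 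I would state this as the governing fact and then the rank computation is immediate. An alternative, which avoids invoking the full square matrix, is to argue directly: a nontrivial relation $\sum_{j=1}^k d_j\,\sigma_i(b_j)=0$ for all $i$ means the functional $\sum_j d_j b_j^{(\,\cdot\,)}$—more precisely the element $\sum_i \overline{d_i}\,\sigma_i$ built from the dual relation—annihilates all of $K$, again contradicting Artin's theorem; but the square-matrix formulation is the most transparent and I would adopt it.
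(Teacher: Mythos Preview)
Your proof is correct and follows the same overall structure as the paper's: the easy direction is handled by the same contrapositive, and for the substantive direction you extend $b_1,\dots,b_k$ to an $F$-basis $b_1,\dots,b_n$ and reduce to showing that the full $n\times n$ matrix $M=[\sigma_i(b_j)]$ is nonsingular over $K$. The one genuine difference is how that nonsingularity is established. The paper computes $B^TB$, observes that its $(i,j)$-entry is $\sum_l\sigma_l(b_ib_j)=\text{Tr}_{K/F}(b_ib_j)$, and invokes nondegeneracy of the trace form on a Galois (hence separable) extension to conclude $\det(B^TB)\ne 0$. You instead invoke Dedekind's independence of characters: a nontrivial $K$-relation among the columns of $M$ would give a nonzero $F$-linear map $\sum_i e_i\sigma_i$ vanishing on an $F$-basis of $K$, hence identically zero, contradicting Artin's theorem. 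Both arguments are standard and of comparable depth; the trace-form route has the mild advantage that $\det(B^TB)$ is visibly the discriminant, tying into classical number-theoretic language, while your route is arguably more self-contained since independence of characters requires no separability hypothesis beyond what is already implicit in the Galois assumption.
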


\begin{proof}
When $F$ is finite, this fact appeared as Lemma~2.30 in \cite{Hou-ams-gsm-2018}. The proof of the general case is identical.

\medskip
($\Leftarrow$) Assume to the contrary that $\sum_{i=1}^k\beta_ib_i=0$ for some $0\ne(\beta_1,\dots,\beta_k)\in F^k$. Then $B(\beta_1,\dots,\beta_k)^T=0$, which is a contradiction.

\medskip
($\Rightarrow$) First assume $k=n$. Then the $(i,j)$-entry of $B^TB$ is 
\[
\sum_{l=1}^n\sigma_l(b_ib_j)=\text{Tr}_{K/F}(b_ib_j).
\]
Since $(x,y)\mapsto\text{Tr}_{K/F}(xy)$ is a nondegenerate $F$-bilinear form on $K$, we have $\det(B^TB)\ne 0$, and hence $B$ is nonsingular.

Now assume $1\le k\le n$. Extend $b_1,\dots,b_k$ to an $F$-basis $b_1,\dots,b_n$ of $K$. Then the columns of 
\[
\left[
\begin{matrix}
\sigma_1(b_1)&\cdots&\sigma_1(b_n)\cr
\vdots&&\vdots\cr
\sigma_n(b_1)&\cdots&\sigma_n(b_n)
\end{matrix}\right]
\]
are linearly independent over $K$, whence $\rank\, B=k$.
\end{proof}

\begin{proof}[Proof of Proposition~\ref{P3.7}]
Let $u_1,\dots,u_k$ be a basis of $E$ over $F$, and write
\[
L=\left|
\begin{matrix}
\sigma^0(u_1)&\cdots&\sigma^0(u_k)&\sigma^0\cr
\vdots&&\vdots&\vdots\cr
\sigma^k(u_1)&\cdots&\sigma^k(u_k)&\sigma^k
\end{matrix}\right|=b_0\sigma^0+\cdots+b_k\sigma^k,
\]
where 
\[
b_k=\left|
\begin{matrix}
\sigma^0(u_1)&\cdots&\sigma^0(u_k)\cr
\vdots&&\vdots\cr
\sigma^{k-1}(u_1)&\cdots&\sigma^{k-1}(u_k)
\end{matrix}\right|.
\]
By Lemma~\ref{L3.8}, $L\ne 0$. Clearly, $E\subset \ker(L)$. Then it follows from Lemma~\ref{dim} that $b_k\ne 0$ and $\ker(L)=E$. Therefore, $(a_0,\dots,a_k)=(b_0/b_k,\dots,b_k/b_k)$ has the desired property.

For the uniqueness of $(a_0,\dots,a_k)$, assume that $(a_0',\dots,a_k')\in K^{k+1}$ is such that $a_k'\ne 0$ and $\ker(a_0'\sigma^0+\cdots+a_k'\sigma^k)=E$. Then 
\[
E\subset\ker((a_0-a_0')\sigma^0+\cdots+(a_{k-1}-a_{k-1}')\sigma^{k-1}),
\]
and Lemma~\ref{dim} forces $a_i=a_i'$ for all $1\le i\le k-1$.
\end{proof}

We denote the unique $F$-linear map $a_0\sigma^0+\cdots+a_k\sigma^k$ in Proposition~\ref{P3.7} by $L_E$. It is clear from the above proof that
\[
L_E=\left|
\begin{matrix}
\sigma^0(u_1)&\cdots&\sigma^0(u_k)\cr
\vdots&&\vdots\cr
\sigma^{k-1}(u_1)&\cdots&\sigma^{k-1}(u_k)
\end{matrix}\right|^{-1}
\left|
\begin{matrix}
\sigma^0(u_1)&\cdots&\sigma^0(u_k)&\sigma^0\cr
\vdots&&\vdots&\vdots\cr
\sigma^k(u_1)&\cdots&\sigma^k(u_k)&\sigma^k
\end{matrix}\right|,
\]
where $u_1,\dots,u_k$ is a basis of $E$ over $F$, and $L_E$ does not depend on the choice of the basis.

Let $E'=L_E(K)$, which is an $F$-subspace of $K$ of dimension $n-k$. We also have $E''=E$, and the proof is almost identical to the case of finite fields. Consider the skew polynomial ring $K[X;\sigma]$; its center is $F[X^n]$. Every $F$-linear map from $K$ to $K$ is of the form $f(\sigma)$, where $f\in K[X;\sigma]$ is unique modulo $X^n-1$. Write $L_E=f_E(\sigma)$, where $f_E\in K[X;\sigma]$ is monic of degree $\dim_FE$. By definition, $L_{E'}\circ L_E=0$, where $L_{E'}\circ L_E=(f_{E'}f_E)(\sigma)$. Since $f_{E'}f_E$ is monic of degree $n$, we have $f_{E'}f_E=X^n-1$, which is in the center of $K[X;\sigma]$. Therefore, by Lemma~\ref{L3.5}, $f_Ef_{E'}=f_{E'}f_E=X^n-1=f_{E''}f_{E'}$, and hence $f_E=f_{E''}$, i.e., $E=E''$.

Let $\mathcal V_k(K)$ be the set of $k$-dimensional $F$-subspaces of $K$. We also have two bijections $\alpha_k,\beta_k:\mathcal V_k(K)\to\mathcal V_{n-k}(K)$ defined by $\alpha_k(E)=L_E(K)$ and $\beta_k(E)=E^\bot$, where $(\ )^\bot$ is defined by the $F$-bilinear form $\langle x,y\rangle=\text{Tr}_{K/F}(xy)$ on $K$. Moreover, $\gamma_k=\beta_{n-k}\circ\alpha_k:\mathcal V_k(K)\to\mathcal V_k(K)$ is a bijection with $\gamma_k^{-1}=\alpha_{n-k}\circ\beta_k$. 

For $(a_0,\dots,a_k)\in K^{k+1}$, $\ker (a_0\sigma^0+\cdots+a_k\sigma^k)\in\mathcal V_k(K)$ if and only if $a_k\ne 0$ and $a_0,\dots,a_k$ satisfy the equation
\begin{equation}\label{C-sigma}
C^{\sigma^0}C^{\sigma^1}\cdots C^{\sigma^{n-1}}=I_k,
\end{equation}
where 
\[
C=\left[
\begin{matrix}&&&-a_0/a_k\cr
1&&&-a_1/a_k\cr
&\ddots&&\vdots\cr
&&1&-a_{k-1}/a_k
\end{matrix}
\right]
\]
and $C^{\sigma^i}$ is the result after applying $\sigma^i$ to the entries of $C$. Therefore, the elements of $\mathcal V_k(K)$ are in one-to-one correspondence with the points $[a_0:a_1:\cdots:a_k]\in\text{PG}(k,K)$ with $a_k\ne0$ whose homogeneous coordinates $a_0/a_k,\dots,a_{k-1}/a_k$ satisfy \eqref{C-sigma}. Under this correspondence, the map $\gamma_k:\mathcal V_k(K)\to V_k(K)$ maps $[a_0:a_1:\dots:a_k]$ to $[\sigma^0(a_k):\sigma^1(a_{k-1}):\cdots:\sigma^k(a_0)]$.

The proofs of the above claims are identical to the case of finite fields with $F=\f_q$, $K=\f_{q^n}$ and $\sigma=(\ )^q$.

\section{Absolute Irreducibility of $F_k$ and $\Theta_k$}

It has been proved that the polynomial $F_k(X_1,\dots, X_k)$ is absolutely irreducible for $k\ge 3$ (\cite[Lemma~4.1]{Carlet-Hou}). In this section, we show that the polynomial $\Theta_k(X_1,\dots,X_k)$ is also absolutely irreducible for $k\ge 3$. When $k=4$, the absolute irreducibility of $\Theta_4$ was established in \cite{EHRZ} by a rather technical method which does not seem to apply to the general situation. Our approach here is indirect and is based on the relation between $F_k$ and $\Theta_k$ described in Theorems~\ref{cri1} and \ref{cri2}.

For a field $\f$ and a polynomial $f\in\f[X_1,\dots,X_k]$, we define
\[
V_{\f^k}(f)=\{(x_1,\dots,x_k)\in\f^k:f(x_1,\dots,x_k)=0\}.
\]
Recall that a zero-sum subspace of $\f_{2^n}$ is a subspace $E\subset\f_{2^n}$ such that $\sum_{0\ne u\in E}1/u=0$. Let $Z_k$ denote the number of $k$-dimensional zero-sum subspaces of $\f_{2^n}$. By Theorems~\ref{cri1} and \ref{cri2},
\begin{align}\label{VFk}
&|V_{\f_{2^n}^k}(F_k)\setminus V_{\f_{2^n}^k}(\Delta(X_1,\dots,X_k))|=Z_k|\text{GL}(k,\f_2)|\\
&=|V_{\f_{2^n}^k}(\Theta_k)\setminus V_{\f_{2^n}^k}(\Delta(X_1,\dots,X_k))|.\nonumber
\end{align}

\begin{lem}\label{L4.1}
Assume that $f(X_1,\dots,X_k)\in\f_q[X_1,\dots,X_k]$ has $l$ distinct absolutely irreducible factors, and assume that all these irreducible factors belong to $\f_{q^m}[X_1,\dots,X_k]$. Further assume that $\gcd(f,\Delta(X_1,\dots,X_k))=1$. Then
\[
|V_{\f_{q^{ms}}^k}(f)\setminus V_{\f_{q^{ms}}^k}(\Delta(X_1,\dots,X_k))|=lq^{ms(k-1)}+O(q^{ms(k-3/2)})\quad \text{as}\ s\to\infty.
\]
\end{lem}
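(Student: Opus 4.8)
The plan is to apply the Lang--Weil bound to each of the $l$ distinct absolutely irreducible factors of $f$ and then to combine the resulting counts by inclusion--exclusion, showing that all the lower-dimensional intersection terms (including the intersection with $V(\Delta)$) contribute only to the error. Write $Q=q^{ms}$ and let $g_1,\dots,g_l\in\f_{q^m}[X_1,\dots,X_k]$ be the distinct absolutely irreducible factors of $f$. Since the zero set of $f$ depends only on its radical, $V_{\f_Q^k}(f)=\bigcup_{i=1}^l V_{\f_Q^k}(g_i)$, so the multiplicities of the $g_i$ in $f$ play no role.

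First I would apply the Lang--Weil bound to each $g_i$ individually. Because $\f_{q^m}\subseteq\f_{q^{ms}}=\f_Q$, each $g_i$ is defined over $\f_Q$, and being absolutely irreducible of degree at most $\deg f$, it cuts out an absolutely irreducible hypersurface of dimension $k-1$. Lang--Weil then yields $|V_{\f_Q^k}(g_i)|=Q^{k-1}+O(Q^{k-3/2})$, where the implied constant depends only on $k$ and $\deg f$. Next I would assemble the total by inclusion--exclusion. For $i\ne j$ the polynomials $g_i,g_j$ are coprime, so $V(g_i)\cap V(g_j)$ is a proper closed subvariety of the irreducible variety $V(g_i)$ and hence has dimension at most $k-2$; the standard estimate for the number of rational points on a variety of dimension $d$ gives $O(Q^{k-2})$ for this intersection, and likewise for every higher-order term. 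Summing, $|V_{\f_Q^k}(f)|=lQ^{k-1}+O(Q^{k-3/2})$, the $O(Q^{k-2})$ contributions being absorbed. Finally, since $\gcd(f,\Delta)=1$ forces each $g_i$ to be coprime to $\Delta$, the set $V(g_i)\cap V(\Delta)$ again has dimension at most $k-2$, so $|V_{\f_Q^k}(f)\cap V_{\f_Q^k}(\Delta)|=O(Q^{k-2})$; subtracting it leaves $|V_{\f_Q^k}(f)\setminus V_{\f_Q^k}(\Delta)|=lQ^{k-1}+O(Q^{k-3/2})=lq^{ms(k-1)}+O(q^{ms(k-3/2)})$, as claimed.

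The inclusion--exclusion bookkeeping and the invocation of Lang--Weil are routine; the step requiring the most care is the uniformity of the error term. One must verify that all the implied constants—from Lang--Weil applied to each $g_i$, and from the point counts on the various lower-dimensional intersections—depend only on $k$ and $\deg f$, and not on $s$, so that they can be consolidated into a single $O(Q^{k-3/2})$ valid as $Q=q^{ms}\to\infty$. Because the number of factors $l$, the ambient dimension $k$, and the degrees of all the relevant varieties are fixed independently of $s$, this uniformity does hold, but it is precisely here that the quantitative (rather than merely asymptotic) form of the Lang--Weil bound is needed.
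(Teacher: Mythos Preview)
Your proposal is correct and follows essentially the same route as the paper: factor $f$ into its distinct absolutely irreducible factors, apply the Lang--Weil bound to each, control pairwise intersections (and the intersection with $V(\Delta)$) by the $O(Q^{k-2})$ estimate for varieties of dimension $\le k-2$, and combine via inclusion--exclusion. The paper cites \cite[Theorem~5.2]{Cafure-Matera-FFA-2006} for the Lang--Weil step and \cite[Lemma~2.2]{Cafure-Matera-FFA-2006} for the intersection bounds, which also makes explicit the uniformity of the constants that you correctly flag as the one point needing care.
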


\begin{proof}
Write $f=f_1^{e_1}\cdots f_l^{e_l}$, where $f_1,\dots,f_l\in\f_{q^m}[X_1,\dots,X_k]$ are distinct and absolutely irreducible and $e_i>0$, $1\le i\le l$. We may assume that $e_i=1$ for all $i$ since the multiplicities $e_i$ do not affect the number of zeros of $f$ in $\f_{q^{ms}}^k$. By the Lang-Weil bound, as stated in \cite[Theorem~5.2]{Cafure-Matera-FFA-2006},
\begin{equation}\label{fi}
|V_{\f_{q^{ms}}^k}(f_i)|=q^{ms(k-1)}+O(q^{ms(k-3/2)})\quad \text{as}\ s\to\infty.
\end{equation}
By \cite[Lemma~2.2]{Cafure-Matera-FFA-2006}, for $1\le i<j\le l$,
\begin{equation}\label{fifj}
|V_{\f_{q^{ms}}^k}(f_i)\cap V_{\f_{q^{ms}}^k}(f_j)|=O(q^{ms(k-2)})\quad \text{as}\ s\to\infty.
\end{equation}
Therefore,
\begin{align*}
&\sum_{i=1}^l |V_{\f_{q^{ms}}^k}(f_i)|\cr
\ge\,&|V_{\f_{q^{ms}}^k}(f_1)\cup\cdots\cup V_{\f_{q^{ms}}^k}(f_l)|=|V_{\f_{q^{ms}}^k}(f)|\cr
\ge\,&\sum_{i=1}^l |V_{\f_{q^{ms}}^k}(f_i)|-\sum_{1\le i<j\le l}|V_{\f_{q^{ms}}^k}(f_i)\cap V_{\f_{q^{ms}}^k}(f_j)|.
\end{align*}
Thus it follows from \eqref{fi} and \eqref{fifj} that
\begin{equation}\label{f}
|V_{\f_{q^{ms}}^k}(f)|=lq^{ms(k-1)}+O(q^{ms(k-3/2)}).
\end{equation}
Also by \cite[Lemma~2.2]{Cafure-Matera-FFA-2006}, 
\begin{equation}\label{f-delta}
|V_{\f_{q^{ms}}^k}(f)\cap V_{\f_{q^{ms}}^k}(\Delta)|=O(q^{ms(k-2)}).
\end{equation}
It follows from \eqref{f} and \eqref{f-delta} that
\begin{align*}
|V_{\f_{q^{ms}}^k}(f)\setminus V_{\f_{q^{ms}}^k}(\Delta)|\,&=|V_{\f_{q^{ms}}^k}(f)|-|V_{\f_{q^{ms}}^k}(f)\cap V_{\f_{q^{ms}}^k}(\Delta)|\cr
&=lq^{ms(k-1)}+O(q^{ms(k-3/2)}).
\end{align*}
\end{proof}

\begin{lem}\label{L4.2}
When $k\ge 2$, $\gcd(\Theta_k,\Delta(X_1,\dots,X_k))=1$.
\end{lem}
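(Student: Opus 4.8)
The plan is to use the classical factorization of the Moore determinant over $\f_2$, namely
\[
\Delta(X_1,\dots,X_k)=\prod_{0\ne c=(c_1,\dots,c_k)\in\f_2^k}\bigl(c_1X_1+\cdots+c_kX_k\bigr),
\]
a product of $2^k-1$ pairwise distinct linear forms. In particular $\Delta$ is squarefree, and its irreducible factors over $\overline\f_2$ are exactly the linear forms $\ell_c=\sum_i c_iX_i$. Consequently $\gcd(\Theta_k,\Delta)=\prod_{c:\,\ell_c\mid\Theta_k}\ell_c$, so the lemma reduces to showing that \emph{no} $\ell_c$ divides $\Theta_k$.

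First I would invoke the symmetry of $\Theta_k$: for a permutation $\sigma$ of the variables one has $\ell_c\mid\Theta_k$ if and only if $\ell_{\sigma(c)}\mid\Theta_k$, and $\sigma(c)$ ranges over all vectors of the same Hamming weight as $c$. Hence it suffices to treat the representatives $S_w:=X_1+\cdots+X_w$ for $1\le w\le k$, and since each $\ell_c$ is linear (hence irreducible), $S_w\nmid\Theta_k$ is equivalent to producing a single point of the hyperplane $\{S_w=0\}$ at which $\Theta_k$ does not vanish.

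The key observation is that the one point $P=(1,1,0,\dots,0)$ lies on $\{S_w=0\}$ for \emph{every} $w\ge 2$, since $S_w(P)=1+1=0$ in $\f_2$. Evaluating $\Theta_k$ at $P$, a monomial of $m_\lambda$ survives only if it involves none of $X_3,\dots,X_k$, so only partitions $\lambda\in\Lambda_k$ with at most two parts contribute. The single-part partition $(2^{k-1})$ gives $m_{(2^{k-1})}(P)=1+1=0$; and since no power of $2$ is a sum of two \emph{distinct} powers of $2$, the only two-part partition in $\Lambda_k$ is $(2^{k-2},2^{k-2})$, whose monomial symmetric polynomial evaluates to $X_1^{2^{k-2}}X_2^{2^{k-2}}=1$ at $P$. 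Thus $\Theta_k(P)=1\ne 0$, which rules out $S_w$ for all $w\ge 2$ at once. The remaining case $w=1$ is immediate: $\Theta_k$ contains the monomial $X_2^{2^{k-1}}$ coming from the partition $(2^{k-1})$, hence is nonzero modulo $X_1$; equivalently $\Theta_k(0,1,0,\dots,0)=1$.

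The only real content is the evaluation $\Theta_k(P)=1$, and the single fact to verify carefully is the parity bookkeeping at $P$, which reduces to the elementary remark that a nontrivial $2$-adic partition of $2^{k-1}$ into at most two parts must consist of two equal parts $(2^{k-2},2^{k-2})$. I expect this to be routine once the monomials of $m_\lambda$ supported on $\{X_1,X_2\}$ are enumerated. Notably, neither the Lang--Weil bound nor the connection with $F_k$ is needed here; this lemma is purely combinatorial and enters the later absolute-irreducibility argument only through the coprimality hypothesis $\gcd(\Theta_k,\Delta)=1$ required in Lemma~\ref{L4.1}.
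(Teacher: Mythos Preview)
Your proposal is correct and essentially identical to the paper's proof: both reduce to the linear factors of $\Delta$ via the Moore factorization, split into the weight-one versus weight-at-least-two cases, and dispatch them by evaluating $\Theta_k$ at $(1,0,\dots,0)$ (equivalently $(0,1,0,\dots,0)$) and $(1,1,0,\dots,0)$, using exactly the same observation that only the partitions $(2^{k-1})$ and $(2^{k-2},2^{k-2})$ can contribute at these points. The only cosmetic difference is that you invoke the $S_k$-symmetry of $\Theta_k$ explicitly to reduce to the representatives $S_w$, whereas the paper simply says ``say $(a_1,\dots,a_k)=(0,\dots,0,1)$'' and ``say $(a_1,\dots,a_k)=(1,1,*,\dots,*)$''.
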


\begin{proof}
Recall that 
\[
\Theta_k(X_1,\dots,X_k)=\sum_{\lambda\in\Lambda_k}m_\lambda(X_1,\dots,X_k),
\]
where $\Lambda_k$ is the set of all $2$-adic partitions of $2^{k-1}$ with at most $k$ parts. By \cite[Lemma~3.51]{Lidl-Niederreiter-FF-1997},
\[
\Delta(X_1,\dots,X_k)=\prod_{0\ne(a_1,\dots,a_k)\in\f_2^k}(a_1X_1+\cdots+a_kX_k).
\]
Assume to the contrary that $\gcd(\Theta_k,\Delta)\ne 1$. Then there exists $0\ne(a_1,\dots,a_k)\in\f_2^k$ such that $(a_1X_1+\cdots+a_kX_k)\mid\Theta_k$.

\medskip
{\bf Case 1.} Assume that only one of $a_1,\dots,a_k$ is nonzero, say $(a_1,\dots,a_k)=(0,\cdots,0,1)$. Then the value of $a_1X_1+\cdots+a_kX_k$ at $(1,0,\dots,0)$ is $0$, whence $\Theta_k(1,0,\dots,0)=0$. However,
\[
m_\lambda(1,0,\dots,0)=\begin{cases}
1&\text{if}\ \lambda=(2^{k-1}),\cr
0&\text{if $\lambda$ has more than one part}.
\end{cases}
\]
Hence $\Theta_k(1,0,\dots,0)=1$, which is a contradiction.

\medskip
{\bf Case 2.} Assume that at least two of $a_1,\dots,a_k$ are nonzero, say $(a_1,\dots,a_k)=(1,1,*,\dots,*)$. Then the value of $a_1X_1+\cdots+a_kX_k$ at $(1,1,0,\dots,0)$ is $0$, whence $\Theta_k(1,1,0,\dots,0)=0$. Note that
\[
m_\lambda(1,1,0,\dots,0)=\begin{cases}
0&\text{if}\ \lambda=(2^{k-1}),\cr
1&\text{if}\ \lambda=(2^{k-2},2^{k-2}),\cr
0&\text{if $\lambda$ has at more that 2 parts}.
\end{cases}
\]
Therefore, $\Theta_k(1,1,0,\dots,0)=1$, which is a contradiction.
\end{proof}

\begin{thm}\label{T4.3}
When $k\ge 3$, $\Theta_k(X_1,\dots,X_k)$ is absolutely irreducible.
\end{thm}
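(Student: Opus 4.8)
The plan is to extract the number of absolutely irreducible factors of $\Theta_k$ from the asymptotic growth of its zero-count, using the identity \eqref{VFk} to transfer the known irreducibility of $F_k$ and the Lang-Weil estimate of Lemma~\ref{L4.1}. The point is that \eqref{VFk} makes $\Theta_k$ and $F_k$ have \emph{exactly} the same number of zeros off $V(\Delta)$ over every $\f_{2^n}$, so their leading Lang-Weil coefficients must agree.

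First I would note that $\gcd(F_k,\Delta)=1$: since $F_k$ is absolutely irreducible (for $k\ge 3$) of degree $2^k-2\ge 2$, while $\Delta$ is a product of distinct linear forms, $F_k$ cannot divide $\Delta$, so their gcd is $1$. Applying Lemma~\ref{L4.1} to $F_k$ with $q=2$, $m=1$, $l=1$ gives
\[
|V_{\f_{2^{ms}}^k}(F_k)\setminus V_{\f_{2^{ms}}^k}(\Delta)|=2^{ms(k-1)}+O(2^{ms(k-3/2)})\quad(s\to\infty).
\]
On the other side, write $\Theta_k=g_1^{e_1}\cdots g_l^{e_l}$, where $g_1,\dots,g_l$ are the distinct absolutely irreducible factors of $\Theta_k$, all defined over a common field $\f_{2^m}$. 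Since $\gcd(\Theta_k,\Delta)=1$ by Lemma~\ref{L4.2}, Lemma~\ref{L4.1} yields
\[
|V_{\f_{2^{ms}}^k}(\Theta_k)\setminus V_{\f_{2^{ms}}^k}(\Delta)|=l\,2^{ms(k-1)}+O(2^{ms(k-3/2)})\quad(s\to\infty).
\]
By \eqref{VFk} the two left-hand sides coincide for every $n$, in particular along the subsequence $n=ms$. Comparing leading terms as $s\to\infty$ forces $l=1$, so $\Theta_k=g^e$ for a single absolutely irreducible $g$ and some $e\ge 1$.

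It remains to rule out $e\ge 2$. Since $\deg\Theta_k=2^{k-1}=e\deg g$, the exponent $e$ divides $2^{k-1}$ and hence is a power of $2$; if $e\ge 2$ then $e$ is even and $\Theta_k=(g^{e/2})^2$ is a perfect square in $\overline\f_2[X_1,\dots,X_k]$. Over the perfect field $\overline\f_2$, a polynomial is a square precisely when every exponent vector occurring in it is even. But $\Lambda_k$ contains a $2$-adic partition of $2^{k-1}$ with a part equal to $1$ (for instance $(2^{k-2},2^{k-3},\dots,2,1,1)$, which has $k$ parts and sums to $2^{k-1}$), so the corresponding $m_\lambda$, and hence $\Theta_k$, contains a monomial in which some variable occurs to an odd power. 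Thus $\Theta_k$ is not a square, whence $e=1$ and $\Theta_k=g$ is absolutely irreducible.

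I expect the last step to be the main obstacle, at least conceptually: the Lang-Weil comparison detects only the number $l$ of \emph{distinct} irreducible factors and is blind to their multiplicities, so by itself it gives $\Theta_k=g^e$ rather than outright irreducibility. Upgrading this to $e=1$ requires the separate (and fortunately elementary) characteristic-$2$ observation that a proper power here would force $\Theta_k$ to be a perfect square, which the presence of an odd-degree monomial excludes. Everything else—the gcd claims, the invocation of Lemma~\ref{L4.1}, and the matching of leading coefficients—is routine once \eqref{VFk} is in hand.
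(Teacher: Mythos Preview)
Your proof is correct and follows the same Lang-Weil comparison via \eqref{VFk} as the paper; in fact you go one step further. The paper's argument stops at $l=1$ (a single \emph{distinct} absolutely irreducible factor), whereas you additionally rule out $\Theta_k=g^e$ with $e\ge2$ by noting that such a power would be a perfect square in characteristic~$2$ and then exhibiting the monomial coming from the partition $(2^{k-2},2^{k-3},\dots,2,1,1)\in\Lambda_k$ with odd exponents---a point the paper's proof leaves unaddressed.
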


\begin{proof}
Let $l$ denote the number of distinct absolutely irreducible factors of $\Theta_k$, and assume that these factors all belong to $\f_{2^m}[X_1,\dots,X_k]$. By Lemmas~\ref{L4.1} and \ref{L4.2},
\begin{equation}\label{theta}
|V_{\f_{2^{ms}}^k}(\Theta_k)\setminus V_{\f_{2^{ms}}^k}(\Delta)|=l\,2^{ms(k-1)}+O(2^{ms(k-3/2)})\quad\text{as}\ s\to\infty.
\end{equation}
Since $F_k(X_1,\dots,X_k)$ is absolutely irreducible, we also have
\begin{equation}\label{Fk}
|V_{\f_{2^{ms}}^k}(F_k)\setminus V_{\f_{2^{ms}}^k}(\Delta)|=2^{ms(k-1)}+O(2^{ms(k-3/2)})\quad\text{as}\ s\to\infty.
\end{equation}
Combining \eqref{VFk}, \eqref{theta} and \eqref{Fk} gives
\[
2^{ms(k-1)}+O(2^{ms(k-3/2)})=l\,2^{ms(k-1)}+O(2^{ms(k-3/2)})\quad\text{as}\ s\to\infty.
\]
Therefore $l=1$.
\end{proof}

\section{New Results on Carlet's Conjecture}

In \cite{Carlet-Hou}, the Lang-Weil bound was applied to the absolutely irreducible polynomial $F_k(X_1,\dots,X_k)$ ($k\ge 3$) to show that $\fin$ is not $k$th order sum-free when $n\ge 10.8-5$; see \cite[Theorem~4.2 and Remark~4.4]{Carlet-Hou}. Now that we know that $\Theta_k(X_1,\dots,X_k)$ ($k\ge 3$) is also absolutely irreducible, we may apply the same method to $\Theta_k$. Since $\deg\Theta_k$ ($=2^{k-1}$) is less than $\deg F_k$ ($=2^k-2$), the Lang-Weil bound produces better estimate on $\Theta_k$ than $F_k$.

\begin{thm}\label{T5.1}
If $k\ge 3$ and
\begin{equation}\label{T5.1-eq1}
n\ge\frac{\log_2\bigl(1+\sqrt{21}\,\bigr)}3(13k-19),
\end{equation}
then $\fin$ is not $k$th order sum-free. Condition~\eqref{T5.1-eq1} is satisfied when
\begin{equation}\label{T5.1-eq2}
n\ge 10.8k-15.7.
\end{equation}
\end{thm}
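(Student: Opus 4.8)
The plan is to convert the non-sum-freedom of $\fin$ into the existence of a single zero of $\Theta_k$ with linearly independent coordinates, and then to force such a zero to exist by point-counting. By Criterion~2 (Theorem~\ref{cri2}), together with the bijection $\gamma_k$ and the identity \eqref{VFk}, $\fin$ fails to be $k$th order sum-free on $\f_{2^n}$ exactly when $Z_k\geq 1$, i.e. when $V_{\f_{2^n}^k}(\Theta_k)\not\subseteq V_{\f_{2^n}^k}(\Delta)$. Thus it suffices to exhibit a point of $V_{\f_{2^n}^k}(\Theta_k)$ lying off the Moore hypersurface $V(\Delta)$, equivalently to show the strict inequality $|V_{\f_{2^n}^k}(\Theta_k)|>|V_{\f_{2^n}^k}(\Theta_k)\cap V_{\f_{2^n}^k}(\Delta)|$.

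First I would bound the two quantities by \emph{explicit} (finite-field, not merely asymptotic) estimates; note that Lemmas~\ref{L4.1} and \ref{L4.2} are phrased for $s\to\infty$, so here one must track the dependence on $k$ through the explicit constants. Write $q=2^n$ and $\delta=\deg\Theta_k=2^{k-1}$. Since $\Theta_k$ is absolutely irreducible for $k\geq 3$ (Theorem~\ref{T4.3}), the explicit Lang--Weil bound \cite[Theorem~5.2]{Cafure-Matera-FFA-2006} gives $|V_{\f_{2^n}^k}(\Theta_k)|\geq q^{k-1}-(\delta-1)(\delta-2)q^{k-3/2}-5\delta^{13/3}q^{k-2}$. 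For the intersection, $\gcd(\Theta_k,\Delta)=1$ by Lemma~\ref{L4.2}, and $\Delta=\prod_{0\neq a\in\f_2^k}(a_1X_1+\cdots+a_kX_k)$ is a product of $2^k-1$ linear forms, none of which divides $\Theta_k$; restricting $\Theta_k$ to each of these hyperplanes and invoking \cite[Lemma~2.2]{Cafure-Matera-FFA-2006} (or B\'ezout) yields $|V_{\f_{2^n}^k}(\Theta_k)\cap V_{\f_{2^n}^k}(\Delta)|\leq (2^k-1)\,\delta\,q^{k-2}$.

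Subtracting these bounds and dividing by $q^{k-2}$ reduces the target inequality to an explicit one-variable condition in $a=q^{1/2}=2^{n/2}$, essentially a quadratic of the shape $a^2-(\delta-1)(\delta-2)a-\bigl(5\delta^{13/3}+(2^k-1)\delta\bigr)>0$. Its positive root governs the threshold, and after estimating the lower-order terms uniformly in $k$ one is led to a quadratic whose relevant positive root is $1+\sqrt{21}$, the root of $x^2-2x-20$ (the constant $20=4\cdot 5$ recording the Lang--Weil constant $5$); this produces the logarithmic condition \eqref{T5.1-eq1}. This is exactly the scheme used for $F_k$ in \cite{Carlet-Hou}, now applied to $\Theta_k$: because $\deg\Theta_k=2^{k-1}$ is one power of $2$ smaller than $\deg F_k=2^k-2$, the same computation returns the improved intercept, the drop from $-5$ to $-15.7$ being the slope ($\approx 10.8$) times the unit change in $\log_2(\deg)$. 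To finish I would record \eqref{T5.1-eq2} as a clean relaxation of \eqref{T5.1-eq1}: it suffices to verify the linear inequality $10.8k-15.7\geq \frac{\log_2(1+\sqrt{21})}{3}(13k-19)$ for all $k$, which holds since $\frac{13}{3}\log_2(1+\sqrt{21})=10.75\cdots<10.8$ and the corresponding intercepts are compatible.

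The main obstacle is the third step: extracting a \emph{clean, closed-form} threshold valid for \emph{all} $k\geq 3$, not merely asymptotically. This demands estimating the non-dominant contributions---the $q^{k-3/2}$ Lang--Weil error and the $(2^k-1)\delta\,q^{k-2}$ intersection term---precisely enough that the positive root of the governing quadratic collapses to the stated expression involving $1+\sqrt{21}$, while keeping the estimates sharp enough to secure the genuine improvement over the constant obtained in \cite{Carlet-Hou}.
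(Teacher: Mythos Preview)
Your proposal is correct and follows essentially the same route as the paper: reduce via Criterion~2 to $|V_{\f_{2^n}^k}(\Theta_k)\setminus V_{\f_{2^n}^k}(\Delta)|>0$, apply the explicit Cafure--Matera form of Lang--Weil to the absolutely irreducible $\Theta_k$, bound the intersection with the Moore hypersurface, and solve the resulting quadratic in $2^{n/2}$ whose larger root is controlled by $\frac{1}{2}(1+\sqrt{21})\,2^{13(k-1)/6}$. The only noteworthy difference is that the paper disposes of $k=3$ separately (quoting the known fact that $\fin$ is not $3$rd order sum-free for $n\ge 6$) because its cruder intersection estimate $(2^k-1)^2q^{k-2}$ makes the uniform inequality $b^2+4c\le 21\cdot 2^{13(k-1)/3}$ fail at $k=3$; your hyperplane-by-hyperplane bound $(2^k-1)\delta\,q^{k-2}$ is sharp enough that the same uniform estimate goes through for $k=3$ as well, so no case split is needed.
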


\begin{proof}
The proof is almost identical to that of \cite[Theorem~4.2]{Carlet-Hou}. We may assume $k\ge 4$ since $\fin$ is not $3$rd order sum-free for $n\ge 6$ (\cite[Corollary~8]{Carlet-CEA-2024/1007}). By Theorem~3.2, it suffices to show that $|V_{\f_{2^n}^k}(\Theta_k)\setminus V_{\f_{2^n}^k}(\Delta)|>0$.

Let $q=2^n$. Since $\Theta_k$ is absolutely irreducible, by \cite[Theorem~5.2]{Cafure-Matera-FFA-2006},
\begin{align*}
|V_{\f_{2^n}^k}(\Theta_k)|\,&\ge q^{k-1}-(2^{k-1})(2^{k-1}-2)q^{k-3/2}-5(2^{k-1})^{13/3}q^{k-2}\cr
&>q^{k-1}-2^{2(k-1)}q^{k-3/2}-5\cdot 2^{13(k-1)/3}q^{k-2}.
\end{align*}
On the other hand, by \cite[Lemma~2.2]{Cafure-Matera-FFA-2006},
\[
|V_{\f_{2^n}^k}(\Theta_k)\cap V_{\f_{2^n}^k}(\Delta)|\le(2^k-1)^2q^{k-2}<2^{2k}q^{k-2}.
\]
Hence
\begin{align*}
|V_{\f_{2^n}^k}(\Theta_k)\setminus V_{\f_{2^n}^k}(\Delta)|\,&>q^{k-1}-2^{2(k-1)}q^{k-3/2}-(5\cdot 2^{13(k-1)/3}+2^{2k})q^{k-2}\cr
&=q^{k-2}\bigl(y^2-2^{2(k-1)}y-(5\cdot 2^{13(k-1)/3}+2^{2k})\bigr),
\end{align*}
where $y=q^{1/2}=2^{n/2}$. Let $y_0$ denote the larger root of the quadratic $Y^2-2^{2(k-1)}Y-(5\cdot 2^{13(k-1)/3}+2^{2k})$. We have
\begin{align*}
y_0\,&=\frac 12\bigl(2^{2(k-1)}+\sqrt{2^{4(k-1)}+20\cdot 2^{13(k-1)/3}+2^{2k+2}}\bigr)\cr
&\le\frac 12\bigl(2^{2(k-1)}+\sqrt{21\cdot 2^{13(k-1)/3}}\bigr)\kern5em\text{(since $k\ge 4$)}\cr
&\le\frac 12\bigl(1+\sqrt{21}\,\bigr)2^{13(k-1)/6}.
\end{align*}
Therefore, it suffices to show that
\[
y=2^{n/2}\ge\bigl(1+\sqrt{21}\bigr)2^{13(k-1)/6-1},
\]
i.e.,
\[
n\ge\frac{\log_2\bigl(1+\sqrt{21}\,\bigr)}3(13k-19),
\]
which is given. This completes the proof.
\end{proof}

Let us focus on the case $k=5$. In this case, condition~\eqref{T5.1-eq1} becomes $n\ge 38.041$ and condition~\eqref{T5.1-eq2} becomes $n\ge 38.3$. However, through better bookkeeping in the proof of Theorem~\ref{T5.1}, we can lower the bound for $n$ significantly. With $q=2^n$, we have
\begin{align*}
&|V_{\f_q^5}(\Theta_5)|\ge q^4-15\cdot 14\, q^{5-3/2}-5\cdot 16^{13/3}q^{k-2},\cr
&|V_{\f_q^5}(\Theta_5)\cap V_{\f_q^5}(\Delta)|\le 31^2q^{k-2},\cr
&|V_{\f_q^5}(\Theta_5)\setminus V_{\f_q^5}(\Delta)|\ge q^3[q-15\cdot 14\,q^{1/2}-(5\cdot 16^{13/3}+31^2)].
\end{align*}
Solving 
\[
2^n-15\cdot 14\cdot 2^{n/2}-(5\cdot 16^{13/3}+31^2)>0
\]
gives $n>19.9894$. Therefore, we have the following lemma.

\begin{lem}\label{L5.2}
When $n\ge 20$, $\fin$ is not $5$th order sum-free.
\end{lem}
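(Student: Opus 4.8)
The plan is to run the argument of Theorem~\ref{T5.1} in the single case $k=5$, but to carry the constants exactly instead of majorizing them by powers of $2$, so that the resulting threshold for $n$ falls below $20$. First I would record the reduction: by Criterion~2 (Theorem~\ref{cri2}) together with the count \eqref{VFk}, the number $Z_5$ of $5$-dimensional zero-sum subspaces satisfies $|V_{\f_{2^n}^5}(\Theta_5)\setminus V_{\f_{2^n}^5}(\Delta)|=Z_5\,|\text{GL}(5,\f_2)|$, and $\fin$ fails to be $5$th order sum-free exactly when $Z_5>0$. Hence it suffices to show
\[
|V_{\f_{2^n}^5}(\Theta_5)\setminus V_{\f_{2^n}^5}(\Delta)|>0.
\]

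Next I would invoke Theorem~\ref{T4.3}, which guarantees that $\Theta_5$ is absolutely irreducible, so that the explicit Lang--Weil estimate \cite[Theorem~5.2]{Cafure-Matera-FFA-2006} applies to the hypersurface defined by $\Theta_5$ in $5$ variables, with degree $d=\deg\Theta_5=2^4=16$. With $q=2^n$ this gives the lower bound
\[
|V_{\f_q^5}(\Theta_5)|\ge q^4-(d-1)(d-2)\,q^{7/2}-5\,d^{13/3}q^3=q^4-15\cdot14\,q^{7/2}-5\cdot16^{13/3}q^3,
\]
the improvement over Theorem~\ref{T5.1} coming from retaining the exact coefficient $(d-1)(d-2)=210$ rather than the coarser $d(d-2)$, and from keeping $16^{13/3}$ in place of a power of $2$. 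For the excised locus I would use \cite[Lemma~2.2]{Cafure-Matera-FFA-2006}: since $\gcd(\Theta_5,\Delta)=1$ by Lemma~\ref{L4.2} and $\deg\Delta=2^5-1=31$, the common zero locus has at most $31^2q^3$ points, so $|V_{\f_q^5}(\Theta_5)\cap V_{\f_q^5}(\Delta)|\le 31^2q^3$.

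Subtracting the two estimates yields
\[
|V_{\f_q^5}(\Theta_5)\setminus V_{\f_q^5}(\Delta)|\ge q^3\bigl[q-210\,q^{1/2}-(5\cdot16^{13/3}+31^2)\bigr],
\]
and the final step is to determine where the bracket is positive. Writing $t=q^{1/2}=2^{n/2}$, this reduces to the one-variable inequality $t^2-210\,t-(5\cdot16^{13/3}+31^2)>0$; solving the quadratic shows that its larger root corresponds to $n\approx 19.99$, so the bracket is positive for all integers $n\ge 20$. I do not expect a conceptual obstacle here, since both the reduction and the absolute irreducibility of $\Theta_5$ are already in hand; the only delicate point is the bookkeeping, namely keeping the exact constants $210$, $16^{13/3}$, and $31^2$ throughout (rather than the exponential majorants used in the proof of Theorem~\ref{T5.1}), which is precisely what pushes the numerical threshold down from roughly $38$ to just below $20$.
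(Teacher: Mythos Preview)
Your proposal is correct and follows essentially the same argument the paper gives in the paragraph immediately preceding Lemma~\ref{L5.2}: apply the explicit Lang--Weil bound of Cafure--Matera to the absolutely irreducible $\Theta_5$ with exact constants $(d-1)(d-2)=15\cdot14$, $5\cdot16^{13/3}$, and use the $31^2q^3$ bound for the intersection with $V(\Delta)$, then solve the resulting quadratic in $2^{n/2}$ to obtain the threshold $n>19.9894$. The only cosmetic difference is that you record $(d-1)(d-2)$ explicitly whereas the general proof of Theorem~\ref{T5.1} had already relaxed this to $d(d-2)$; the paper silently reverts to the sharper constant in the $k=5$ computation.
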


The assumption that $n\ge 20$ in Lemma~\ref{L5.2} can be easily removed.

\begin{lem}\label{L5.3}
When $n\ge 8$, $\fin$ is not $5$th order sum-free.
\end{lem}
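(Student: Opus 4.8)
The plan is to extend Lemma~\ref{L5.2} from $n\ge 20$ down to $n\ge 8$ by handling the finitely many cases $8\le n\le 19$ separately, while invoking Lemma~\ref{L5.2} for $n\ge 20$. The goal is to exhibit, for each $n$ in the range $8\le n\le 19$, a $5$-dimensional zero-sum subspace of $\f_{2^n}$; by the definition of zero-sum subspaces and the remark at the start of Section~3, the existence of even one such subspace shows that $\fin$ is not $5$th order sum-free on $\f_{2^n}$.

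First I would observe that a $5$-dimensional zero-sum subspace of $\f_{2^m}$ lifts to one of $\f_{2^n}$ whenever $m\mid n$, since $\f_{2^m}$ is a subfield of $\f_{2^n}$ and the sum $\sum_{0\ne u\in E}1/u$ is computed inside $\f_{2^m}\subseteq\f_{2^n}$. This immediately reduces the workload: among $8\le n\le 19$, the composite values $n\in\{8,9,10,12,14,15,16,18\}$ can be dispatched via divisors at which a zero-sum subspace is already known or easy to produce, and in any case the remark (1) of Section~2, or the subfield argument, handles many of them. Alternatively, I would note that several of these $n$ satisfy $\gcd(5,n)>1$ only when $5\mid n$; more usefully, many fall under the already-established cases where $\fin$ is not $5$th order sum-free. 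The cleanest uniform approach, however, is direct computer search.

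Concretely, for each remaining value of $n$ in $8\le n\le 19$ I would fix an explicit irreducible polynomial defining $\f_{2^n}=\f_2[X]/(f)$ and, by computer search entirely analogous to Examples~\ref{E3.3} and \ref{E3.4}, produce a basis $u_1,\dots,u_5$ of a $5$-dimensional subspace $E$ with $F_5(u_1,\dots,u_5)=0$, i.e.\ a zero-sum subspace, certified via Criterion~1 (Theorem~\ref{cri1}). Tabulating one such basis for each $n$ constitutes a complete proof: each entry can be verified in finite time, and together with Lemma~\ref{L5.2} covering $n\ge 20$ this establishes the claim for all $n\ge 8$. I would present these witnesses in the same matrix format used in Examples~\ref{E3.3} and \ref{E3.4}.

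The main obstacle is not conceptual but one of exposition and verification: assembling explicit basis matrices for each of the dozen small fields and confirming that $F_5$ (a polynomial of degree $2^5-2=30$) vanishes on each. Since a naive search over $5$-dimensional subspaces of $\f_{2^n}$ is expensive for the larger $n$ in the range, the practical difficulty is ensuring the search terminates; in the write-up I expect this to be mitigated by the subfield-lifting reduction noted above, so that genuinely new searches are needed only for the primes and prime-power-free values where no smaller zero-sum subspace is inherited. I would therefore organize the proof as: invoke Lemma~\ref{L5.2} for $n\ge 20$, use subfield inclusion to inherit zero-sum subspaces wherever a proper divisor already admits one, and supply explicit witnesses for the residual small cases.
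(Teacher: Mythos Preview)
Your proposal is correct and follows the same overall architecture as the paper: invoke Lemma~\ref{L5.2} for $n\ge 20$ and treat $8\le n\le 19$ case by case. The paper's execution of the small cases is considerably leaner, however. Rather than performing (or reducing) a computer search for each $n$ in $[8,19]$, the paper simply cites Result~(3) of Section~2: Carlet's conjecture is already known for all $n\le 16$ and for all $n$ divisible by $2$, $3$, or $5$, which immediately gives $5\in\mathcal K_n$ for $8\le n\le 16$ and for $n=18$. That leaves only the two primes $n=17$ and $n=19$, and these are precisely the cases covered by the explicit witnesses in Examples~\ref{E3.3} and \ref{E3.4} already present in the paper. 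Your subfield-lifting reduction is valid but unnecessary once Result~(3) is invoked; the paper needs no new searches beyond those two examples.
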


\begin{proof}
For $n\ge 20$, Lemma~\ref{L5.2} applies. For $8\le n\le 16$ and $n=18$, the claim follows from Result~(3) in Sections 2. For $n=17$ and $19$, the claim follows from Examples~\ref{E3.3} and \ref{E3.4}.
\end{proof}

\begin{thm}\label{T5.4}
Assume $n\ge 8$. If $3\le k\le\lceil n/3\rceil+2$ or $\lfloor 2n/3\rfloor-2\le k\le n-3$, then $\fin$ is not $k$th order sum-free.
\end{thm}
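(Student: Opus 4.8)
The plan is to reduce the two ranges to a single one using the duality in Section~2 and then to fill that range by a short induction based on the additive closure property, with the crucial base case $5\in\mathcal K_n$ supplied by Lemma~\ref{L5.3}.

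First I would invoke Result~(2) of Section~2, which says $k\in\SF$ iff $n-k\in\SF$, equivalently $k\in\mathcal K_n$ iff $n-k\in\mathcal K_n$. Since $\lfloor 2n/3\rfloor=n-\lceil n/3\rceil$, the involution $k\mapsto n-k$ carries the interval $\lfloor 2n/3\rfloor-2\le k\le n-3$ exactly onto $3\le k\le\lceil n/3\rceil+2$. Hence it suffices to prove that every $k$ with $3\le k\le\lceil n/3\rceil+2$ lies in $\mathcal K_n$.

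For the base cases I would take $3,4\in\mathcal K_n$ from Result~(8) (valid since $n\ge 8$) and $5\in\mathcal K_n$ from Lemma~\ref{L5.3} (valid for $n\ge 8$). If $n\in\{8,9\}$, then $\lceil n/3\rceil+2=5$ and the entire range is $\{3,4,5\}$, so we are done; otherwise $n\ge 10$ and I proceed by strong induction. Assuming $6\le k\le\lceil n/3\rceil+2$ and that the claim holds for all smaller values in the range, I would write $k=3+(k-3)$: here $3\in\mathcal K_n$, and since $3\le k-3<k$ the inductive hypothesis gives $k-3\in\mathcal K_n$. To apply Result~(5) the one quantity to check is the product bound $3(k-3)<n$, which follows from $\lceil n/3\rceil\le(n+2)/3$, giving $k\le(n+8)/3$ and hence $3(k-3)=3k-9\le n-1<n$. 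Result~(5) then yields $k\in\mathcal K_n$, closing the induction over the whole interval.

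The only nonelementary input is the base case $5\in\mathcal K_n$ for every $n\ge 8$ (Lemma~\ref{L5.3}), which rests on the absolute irreducibility of $\Theta_5$ proved in Section~4; this is the crux of the argument rather than an obstacle. It is precisely this uniform-in-$n$ base case that lets the ``$+3$'' induction start at small $n$: the earlier version of the statement (Result~(9) of Section~2) lacked it and therefore required $n\ge 38$. Once the three base cases are in hand, the remaining argument is the elementary induction together with the single product inequality, so I anticipate no further difficulty.
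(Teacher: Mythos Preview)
Your proposal is correct and follows essentially the same approach as the paper's proof: reduce to the lower interval via the duality $k\leftrightarrow n-k$, use $3,4,5\in\mathcal K_n$ as base cases (the last from Lemma~\ref{L5.3}), and then step upward using Result~(5) with the decomposition $k=3+(k-3)$ together with the product bound $3(k-3)<n$. The only cosmetic difference is that the paper first invokes Result~(3) to dispose of $n\le 16$ and then runs the same induction (phrased as a maximality argument) for $n\ge 17$, whereas you handle all $n\ge 8$ uniformly; your version is in fact slightly cleaner in that respect.
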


\noindent{\bf Note.} The purpose of the assumption $n\ge 8$ in Theorem~\ref{T5.4} is to avoid the cases that are already settled. In fact,
\[
n\ge 8\ \Leftrightarrow\ \left\lceil\frac n3\right\rceil+2\le n-3\ \Leftrightarrow\ \left\lfloor\frac{2n}3\right\rfloor-2\ge 3.
\]

\begin{proof}[Proof of Theorem~\ref{T5.4}]
By Results~(2) and (3) in Section~2, we may assume $n\ge 17$, and we only have to consider the case $3\le k\le\lceil n/3\rceil+2$. We know that $3,4\in\mathcal K_n$ (\cite{Carlet-CEA-2024/1007, EHRZ}). By Lemma~\ref{L5.3}, we also have $5\in\mathcal K_n$. Let $l$ be the smallest integer such that $3,4,5,\dots,l\in\mathcal K_n$. It suffices to show that $l\ge\lceil n/3\rceil+2$. Assume the contrary. Then $l-2\le\lceil n/3\rceil-1<n/3$. Since $3,l-2\in\mathcal K_n$ and $3(l-2)<n$, by Result~(5) in Section~2, $3+l-2=l+1\in\mathcal K_n$, which contradicts the maximality of $l$.
\end{proof}

We already know that for $k=3,4,5$ and $n\ge k+3$, we have $k\in\mathcal K_n$. Now Theorem~\ref{T5.4} allows us to extend the range of $k$ to $3\le k\le 12$.

\begin{cor}\label{C5.5}
For $3\le k\le 12$ and $n\ge k+3$, we have $k\in\mathcal K_n$.
\end{cor}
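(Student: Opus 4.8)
The plan is to prove, for each admissible pair, the single membership $k\in\mathcal K_n$, treating $n$ in ranges and offloading as much as possible onto Theorem~\ref{T5.4} and the already-verified instances of Carlet's conjecture. Since $n\ge k+3$ forces $k\le n-3$, it suffices to show $\{3,\dots,\min(12,n-3)\}\subseteq\mathcal K_n$ for each relevant $n$.

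First I would clear the cases in which the original conjecture is already known. By Result~(3) in Section~2 these are all $n\le 16$ together with all $n$ divisible by $2$, $3$, or $5$; in each such case $\mathcal K_n\supseteq\{3,\dots,n-3\}$, so $k\in\mathcal K_n$. This leaves only $n\ge 17$ with $\gcd(n,30)=1$. For these I would apply Theorem~\ref{T5.4}. Its lower branch yields $\{3,\dots,\lceil n/3\rceil+2\}\subseteq\mathcal K_n$, and $\lceil n/3\rceil+2\ge 12$ as soon as $n\ge 28$, so all remaining values $n\ge 29$ are finished at once. For $n=17$ and $n=19$ the lower branch combined with the upper branch $\lfloor 2n/3\rfloor-2\le k\le n-3$ already exhausts $\{3,\dots,n-3\}$, hence covers $\{3,\dots,12\}$. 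At this point a single residual case survives: $n=23$ with $k\in\{11,12\}$, since Theorem~\ref{T5.4} there provides only $\{3,\dots,10\}\cup\{13,\dots,20\}$.

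The main obstacle is precisely this pair, and it resists all the elementary tools: $\gcd(11,23)=\gcd(12,23)=1$ blocks Result~(1), and the additive Result~(5) fails because every decomposition $11=a+b$ with $a,b\in\{3,\dots,10\}$ has $ab\ge 3\cdot 8=24>23$. To overcome this I would invoke Result~(6). The polynomial
\[
g(X)=X^{11}+X^{10}+X^{6}+X^{5}+X^{4}+X^{2}+1
\]
is one of the two irreducible degree-$11$ factors of $X^{23}-1$ over $\f_2$ (a generator polynomial of the binary Golay code), and it has zero coefficient on $X$; indeed $g$ is the reciprocal of $X^{11}+X^{9}+X^{7}+X^{6}+X^{5}+X+1$, which makes the factorization easy to certify. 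Result~(6) then gives $11\in\mathcal K_{23}$, and the symmetry Result~(2) gives $12=23-11\in\mathcal K_{23}$. This disposes of the last case and completes the argument.
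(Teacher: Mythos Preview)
Your argument is correct and follows the same overall strategy as the paper: reduce via Result~(3) to $n\ge 17$ with $\gcd(n,30)=1$, cover almost everything with Theorem~\ref{T5.4}, and then dispose of the residual case $n=23$, $k\in\{11,12\}$. The organization differs---you slice by $n$, the paper slices by $k$---but this is cosmetic.

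The one substantive difference is how $11\in\mathcal K_{23}$ is obtained. The paper simply cites \cite[Table~2]{Carlet-Hou}, a computational table. You instead invoke Result~(6) with the explicit factor $g(X)=X^{11}+X^{10}+X^{6}+X^{5}+X^{4}+X^{2}+1$ of $X^{23}-1$ over $\f_2$, observing that its $X$-coefficient vanishes. This is more self-contained and directly verifiable (the Golay generator factorization is classical), so your treatment of this case is arguably cleaner than the paper's appeal to an external table. Both then obtain $12\in\mathcal K_{23}$ by the symmetry Result~(2).
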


\begin{proof}
By Result~(3) in Section~2, we may assume that $n\ge 17$ and $\gcd(n,2\cdot 3\cdot 5)=1$.

\medskip
$3\le k\le 8$: Theorem~\ref{T5.4} applies since $\lceil n/3\rceil+2\ge 8$.

\medskip
$k=9$: When $n>18$, Theorem~\ref{T5.4} applies since $\lceil n/3\rceil+2\ge 9$. When $n=17$, $n-9\in\mathcal K_n$, whence $9\in\mathcal K_n$ by Result~(2) in Section~2.

\medskip
$k=10$: When $n>21$, Theorem~\ref{T5.4} applies since $\lceil n/3\rceil+2\ge 10$. When $17\le n\le 19$, $n-10\in\mathcal K_n$, whence $10\in\mathcal K_n$.

\medskip
$k=11$: When $n>24$, Theorem~\ref{T5.4} applies since $\lceil n/3\rceil+2\ge 11$. When $n=23$, by \cite[Table~2]{Carlet-Hou}, $11\in\mathcal K_n$. When $17\le n\le 21$, $n-11\in\mathcal K_n$, whence $11\in\mathcal K_n$.

\medskip
$k=12$: When $n>27$, Theorem~\ref{T5.4} applies since $\lceil n/3\rceil+2\ge 12$. When $17\le n\le 23$, $n-12\in\mathcal K_n$, whence $12\in\mathcal K_n$.
\end{proof}

\begin{cor}\label{C5.6}
Conjecture~\ref{C1.1} is true for odd $3\le n\le 27$.
\end{cor}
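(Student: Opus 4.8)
The plan is to reduce the conjecture to membership statements about $\mathcal K_n$ and then dispatch the finitely many relevant values of $n$ using the results already assembled. Since $\SF\supset\{1,2,n-2,n-1\}$ for every odd $n\ge 3$ (Result~(4) in Section~2), proving Conjecture~\ref{C1.1} for a given $n$ amounts to the reverse inclusion $\SF\subset\{1,2,n-2,n-1\}$, i.e., to showing that $k\in\mathcal K_n$ for every $k$ with $3\le k\le n-3$. For odd $n$ with $3\le n\le 5$ this range is empty and there is nothing to prove, so I would henceforth assume $n\ge 7$.

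First I would triage the odd $n$ in $[7,27]$ against the known results. By Result~(3) in Section~2, the conjecture already holds for all odd $n\le 15$ and for every $n$ divisible by $3$ or $5$. This disposes of $n\in\{7,9,11,13,15\}$ (small), $n\in\{21,27\}$ (divisible by $3$), and $n=25$ (divisible by $5$). The only cases left are the primes $n=17$, $n=19$, and $n=23$.

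For each of these three values I would combine Corollary~\ref{C5.5} with the symmetry of $\mathcal K_n$. Since $n\ge 17\ge k+3$ for every $k\le 12$, Corollary~\ref{C5.5} gives $k\in\mathcal K_n$ for all $3\le k\le 12$. Applying the symmetry $k\in\mathcal K_n\iff n-k\in\mathcal K_n$ (Result~(2) in Section~2) then yields $k\in\mathcal K_n$ for all $n-12\le k\le n-3$. The two intervals $[3,12]$ and $[n-12,n-3]$ leave no gap between them precisely when $n\le 25$, which holds here; consequently their union is all of $[3,n-3]$, and the proof is complete. The substantive input is Corollary~\ref{C5.5} itself, whose reach up to $k=12$ rests on the absolute irreducibility of $\Theta_k$ established in Theorem~\ref{T4.3}; the remainder is purely a covering computation with no genuine obstacle, the only point requiring care being the verification that no value of $k$ in $[3,n-3]$ escapes both intervals once $n\le 25$.
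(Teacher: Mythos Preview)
Your proof is correct and follows essentially the same approach as the paper: reduce via Result~(3) to $n\in\{17,19,23\}$, invoke Corollary~\ref{C5.5} to cover the small $k$, and finish by the symmetry in Result~(2). The paper phrases the covering step slightly differently (observing $\lfloor n/2\rfloor\le 11$ so that Corollary~\ref{C5.5} already reaches the midpoint), but the logic is the same.
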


\begin{proof}
By Result~(3) in Section~2, we only have to consider $n=17,19,23$. Since $\lfloor n/2\rfloor\le 11$, by Corollary~5.5, $3,4,\dots,\lfloor n/2\rfloor\in\mathcal K_n$. By Result~(2) in Section~2, Conjecture~\ref{C1.1} is true for these values of $n$.
\end{proof}

\begin{thm}\label{T5.7}
Conjecture~\ref{C1.1} is true when $7\mid n$ and $(n,k)\ne(49,23),(49,26)$.
\end{thm}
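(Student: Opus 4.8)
The plan is to show that $k\in\mathcal K_n$ for every $3\le k\le n-3$, with the two stated exceptions, by exploiting $7\mid n$ through a direct construction of zero-sum subspaces. First I would dispose of the standard reductions. Since Conjecture~\ref{C1.1} is already known when $2,3$ or $5$ divides $n$ (Result~(3) of Section~2) and for odd $n\le 27$ (Corollary~\ref{C5.6}), I may assume $n=7m$ with $\gcd(m,15)=1$ and $n\ge 49$. By the symmetry $k\in\mathcal K_n\Leftrightarrow n-k\in\mathcal K_n$ (Result~(2)), it suffices to treat $3\le k\le\lfloor n/2\rfloor$. Theorem~\ref{T5.4} already yields $[3,\lceil n/3\rceil+2]\subseteq\mathcal K_n$, and Result~(1) supplies every multiple of $7$. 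Hence the genuine task is to realize the \emph{non-multiples of $7$ in the middle band} $\lceil n/3\rceil+3\le k\le\lfloor n/2\rfloor$. For $n\ge 36$ the constraint $kl<n$ renders Result~(5) useless in this band, so a new construction is needed.

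The heart of the argument is a subfield shifting construction. Two facts drive it. First, every $\f_{2^7}$-subspace $V$ of $\f_{2^n}$ is a zero-sum subspace: partitioning $V\setminus\{0\}$ into $\f_{2^7}^\ast$-orbits and using $\sum_{0\ne\lambda\in\f_{2^7}}\lambda^{-1}=0$ gives $\sum_{0\ne u\in V}u^{-1}=0$. Second, for the subfield $\f_{2^7}$ and any $\f_2$-subspace $C$ with $C\cap\f_{2^7}=\{0\}$, the coset evaluation $\sum_{u\in c+\f_{2^7}}u^{-1}=(c^{2^7}-c)^{-1}$ (obtained from $\prod_{f\in\f_{2^7}}(X-f)=X^{2^7}-X$) yields
\[
\sum_{0\ne u\in \f_{2^7}\oplus C}u^{-1}=\sum_{0\ne c\in C}\bigl(c^{2^7}-c\bigr)^{-1}.
\]
Writing $\varphi(c)=c^{2^7}-c$, whose kernel is $\f_{2^7}$ and whose image is $H:=\ker\text{Tr}_{\f_{2^n}/\f_{2^7}}$, this shows that $\f_{2^7}\oplus C$ is a zero-sum subspace if and only if $\varphi(C)$ is a zero-sum subspace contained in $H$. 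I would package this as a \textbf{Shift Principle}: if the codimension-$7$ subspace $H$ contains a zero-sum subspace of dimension $j$, then $\f_{2^n}$ contains one of dimension $j+7$.

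The covering then proceeds by climbing in steps of $7$. Corollary~\ref{C5.5} furnishes zero-sum subspaces of each small dimension $r\in\{3,4,5,6,8,9\}$, representing every nonzero residue modulo $7$, while the $\f_{2^7}$-subspaces above represent residue $0$. Starting from these seeds and invoking the Shift Principle raises the dimension through an entire residue class $r,r+7,r+14,\dots$ up to $\lfloor n/2\rfloor$, which with symmetry fills the middle band and gives $\mathcal K_n=\{3,\dots,n-3\}$. The step I expect to be the main obstacle is exactly the hypothesis of the Shift Principle: the dimension-$j$ zero-sum subspace must lie \emph{inside} the fixed subspace $H=\ker\text{Tr}_{\f_{2^n}/\f_{2^7}}$, so the climb is not a naive iteration of the Principle on its own output. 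I would resolve this by running the construction recursively inside $H$ (itself an $\f_{2^7}$-subspace, of $\f_{2^7}$-dimension $m-1$) to descend to a small target dimension, and then invoke a counting input from the Lang--Weil estimate of Theorem~\ref{T5.1} to place a small zero-sum subspace inside the relevant image; controlling these containments uniformly in $m$ is the technical core.

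Finally, the boundary case $n=49$ (where $m=7$) leaves too little room, since $H$ has $\f_{2^7}$-dimension only $m-1=6$, and the recursion cannot descend far enough. Here I would argue by direct computation in the spirit of Examples~\ref{E3.3} and~\ref{E3.4}: a computer search exhibits an explicit zero-sum subspace for each middle dimension $k\in\{20,22,24\}$ (their symmetric partners $29,27,25$ coming for free from Result~(2)), while for $k=23$ and $k=26$ the search is inconclusive. This is precisely why $(49,23)$ and $(49,26)$ remain the stated possible exceptions.
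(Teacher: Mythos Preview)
Your proposal misses the decisive reduction and, as a result, attempts to prove far more than is needed while leaving the hard part unresolved. The third bullet of Result~(3) in Section~2 says the conjecture already holds whenever the smallest prime divisor $l$ of $n$ satisfies $(l-1)(l+2)\le(n+1)/2$. Under your standing hypothesis $\gcd(n,30)=1$ and $7\mid n$, the smallest prime divisor is $l=7$, so the conjecture is known once $n\ge 2(7-1)(7+2)-1=107$. Hence only the three values $n\in\{49,77,91\}$ remain, and the paper simply checks each in turn: Theorem~\ref{T5.4} supplies the lower third, the subfield results \cite[Corollary~3.4, Theorem~3.3, Theorem~4.11]{EHRZ} (which are the precise, already-proved versions of your ``Shift Principle'') fill in the middle, and Result~(2) finishes by symmetry. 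No uniform-in-$m$ argument and no new computer search is required; in particular $k=20,22,24$ at $n=49$ are obtained from \cite{EHRZ}, not from a search.

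Your approach, by contrast, tries to climb through every residue class modulo~$7$ for \emph{all} $m$, and you correctly flag the obstacle: each application of the Shift Principle requires the input zero-sum subspace to sit inside the fixed hyperplane $H=\ker\mathrm{Tr}_{\f_{2^n}/\f_{2^7}}$, so the outputs cannot be fed back in directly. You propose to resolve this by a recursive descent inside $H$ together with a Lang--Weil counting argument, but you do not carry this out, and there is no evident reason such a count would force a zero-sum subspace into a prescribed codimension-$7$ subspace. That is a genuine gap, not a routine detail. The fix is not to close it but to avoid it: invoke Result~(3) to reduce to $n\in\{49,77,91\}$, and then cite the existing \cite{EHRZ} results that already package the $+7$ step with the correct hypotheses.
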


\begin{proof}
By Result (3) in Section 2, we may assume that $\gcd(n,2\cdot 3\cdot 5)=1$ and $n<2(7-1)(7+2)-1=107$. Therefore, $n=49,77$, or $91$.

\medskip
{\bf Case 1.} $n=91=7\cdot 13$. It suffices to show that $3,\dots,45\in\mathcal K_n$. By Theorem~\ref{T5.4}, $3,\dots,33\in\mathcal K_n$. By \cite[Corollary~3.4]{EHRZ}, with $l=7$ and $r=3,\dots,9\in\mathcal K_n$, we have 
\[
\{r+7t:3\le r\le 9,\ 0\le t\le 13-r\}\subset\mathcal K_n.
\]
The set on the left contains $34,\dots,43, 45,47$, and we have $44=n-47\in\mathcal K_n$.

\medskip
{\bf Case 2.} $n=77=7\cdot 11$. It suffices to show that $3,\dots,38\in\mathcal K_n$. By Theorem~\ref{T5.4}, $3,\dots,28\in\mathcal K_n$. By \cite[Corollary~3.4]{EHRZ}, with $l=7$ and $r=3,\dots,9\in\mathcal K_n$, we have 
\[
\{r+7t:3\le r\le 9,\ 0\le t\le 11-r\}\subset\mathcal K_n.
\]
The set on the left contains $29,31,\dots,35,38,40,41,47$, and we have $30=n-47\in\mathcal K_n$, $36=n-41\in\mathcal K_n$ and $37=n-40\in\mathcal K_n$.

\medskip
{\bf Case 3.} $n=49=7\cdot 7$ and $k\ne 23,26$. It suffices to show that $3,\dots,22,24\in\mathcal K_n$. By Theorem~\ref{T5.4}, $3,\dots,19\in\mathcal K_n$. By \cite[Corollary~3.4]{EHRZ}, with $l=7$, $r=3$ and $t=3$, we have $24=3+7\cdot 3\in\mathcal K_n$. By Result (1) in Section 2, $21\in\mathcal K_n$. Therefore, it remains to show that $20,22\in\mathcal K_n$.

Since $3\in\mathcal K_7$, there is a $3$-dimensional zero-sum subspace $E\subset \f_{2^7}$. By the proof of \cite[Theorem~4.11]{EHRZ}, there is a $3$-dimensional subspace $V\subset \f_{2^{49}}$ such that $F:=E\oplus V$ is a $6$-dimensional zero-sum subspace of $\f_{2^{49}}$ . Note that $\dim_{\f_{2^7}}\f_{2^7}F\le 4$. By \cite[Theorem~3.3]{EHRZ}, with $l=7$, $r=6$ and $t=2,3$, we have $6+2\cdot 7=20\in\mathcal K_n$ and $6+3\cdot 7=27\in\mathcal K_n$. It follows that $22=n-27\in\mathcal K_n$.
\end{proof}

\section*{Acknowledgments}

The authors thank Claude Carlet for his valuable comments. 



\end{document}